\newtheorem{theorem}{Theorem}
\newtheorem{lemma}{Lemma}
\newtheorem{example}{Example}
\newtheorem{definition}{Definition}
\begin{document}

\title{On Dedekind's problem for complete simple games}
\author{Sascha Kurz}
\author{Nikolas Tautenhahn}
\address{Sascha Kurz\\Department for Mathematics, Physics and Informatics\\University Bayreuth\\Germany}
\email{sascha.kurz@uni-bayreuth.de}
\address{Nikolas Tautenhahn\\Department for Mathematics, Physics and Informatics\\University Bayreuth\\Germany}
\email{nikolas.tautenhahn@uni-bayreuth.de}

\keywords{Boolean functions, Dedekind's problem, voting theory, complete simple games, application of the parametric Barvinok algorithm}

\subjclass[2000]{Primary: 05A15; Secondary: 91B12, 94C10, 52B20}

\maketitle

\begin{abstract}
We combine the parametric Barvinok algorithm with a generation algorithm for a finite list of suitably chosen
discrete sub-cases on the enumeration of complete simple games, i.~e.{} a special subclass of monotone Boolean functions.
Recently, Freixas et al.{} have proven an enumeration formula for complete simple games with two types of voters. We will provide a shorter proof and an enumeration formula for complete simple games with two shift-minimal winning coalitions.
\end{abstract}

\twocolumn

\section{Introduction}
Consider a yes-no voting system for a set of $n$ voters. The acceptance of a proposal should depend on the subset of ``yea'' voters. In general, the acceptance may be described by a Boolean function. There are several features that we expect to be valid for a voting system. The most natural feature is that a proposal being favored by a set $Y$ of voters will not be rejected if it is accepted with the support of a subset $Y'\subseteq Y$ of the voters. This directly leads us to the class of monotone Boolean functions or simple games.

There are different concepts for the set of required features of a voting system. One is that of a weighted majority game. Here, we are given non-negative voting weights $w_i\in\mathbb{R}_{\ge 0}$ for the voters and a quota $q\in\mathbb{R}_{>0}$. A proposal is accepted if and only if $\sum\limits_{i\in Y}w_i\ge q$, where $Y$ is the set of voters in favor of the proposal. Speaking informally, we can say that for a pair of voters $i$ and $j$ with $w_i<w_j$, voter $i$ has less influence than voter $j$. The exact definition or measurement of ``influence'' is given by a desirability relation introduced by Isbell in \cite{isbell}. There are some cases for such desirability relations which have no realization using non-negative real weights. The class of all total desirability relations is called complete simple games which is a natural extension of weighted majority games. For the other direction, it is well known that each complete simple game can be represented as the intersection of $k$~weighted majority games where the smallest such number $k$ is called the dimension, see \cite{dimension}.

The theory of simple games is a very active area of research. For the broad variety of applications we quote Taylor and Zwicker \cite{0943.91005}: ``Few structures arise in more contexts and lend themselves to more diverse interpretations than do simple games.''

The aim of this paper is to provide an algorithm to determine exact formulas for the number of non-isomorphic complete simple games.

\subsection{Related results}
The enumeration of monotone Boolean functions\footnote{or antichains} is sometimes called ``Dedekind's problem''. By now, the numbers $mb(n)$ of monotone Boolean functions are known for $n\le 8$; they are $3$, $6$, $20$, $168$, $7\,581$, $7\,828\,354$, $2\,414\,682\,040\,998$, and $56\,130\,437\,228\,687\,557\,907\,788$ \cite{1072.06008}. For $n=9$, it is only known that there are more than $10^{42}$ different antichains. Except an asymptotic formula \cite{1072.06008} and a computationally useless exact formula \cite{0632.06020} nearly nothing is known on the values $mb(n)$.

If an additional parameter $k$ is introduced, some exact formulas for the number $mb(n,k)$ of antichains of $\{1,\dots,n\}$ consisting of exactly $k$ subsets can be deduced:
\begin{eqnarray*}
  mb(n,0) &=& 1,\\
  mb(n,1) &=& 2^n,\\
  mb(n,2) &=& 2^n\cdot\frac{2^n - 1}{2}-  3^n + 2^n,\\
  mb(n,3) &=& 2^n\cdot\frac{\left(2^n - 1\right)\left(2^n - 2\right)}{6}-  6^n + 5^n + 4^n - 3^n.
\end{eqnarray*}
In \cite{enum1} Kilibarda and Jovovi\'c gave a general procedure for calculating exact formulas for $mb(n,k)$ where $k$ is fixed and have explicitly listed formulas for $k\le 10$. Roughly speaking, they reduced the enumeration problem for $mb(n,k)$ to the enumeration of all connected bipartite graphs with fixed numbers of vertices and edges, and with a given number of $2$-colorings of a certain type. A generalization of their method is given in \cite{antichains_multisets}.

Unfortunately, the number $k$ of elements of an antichain of $N$ can become quite large. Due to Sperner's theorem, $k$ can vary between $0$ and $n \choose \left\lfloor\frac{n}{2}\right\rfloor$, see e.~g.{} \cite{0868.05001}.

\medskip

The authors of \cite{1151.91021} have enumerated complete simple games with one shift-minimal winning coalition. The number of symmetric complete simple games is a classical result due to May, see e.~g.{} \cite{pre05632618}. An exact formula for the number of complete simple games with two types of voters was proven very recently in \cite{sub_freixas,arxix_freixas}

\medskip

All weighted majority games up to $6$ voters were enumerated in \cite{fishburn_brams,0105.12002}. The enumeration for $7$~voters was done in \cite{0233.94016} and for $8$~voters we refer e.~g.{} to \cite{owen,integer_representation,0841.90134,0205.17805}. Some special classes of complete simple games and weighted majority games were enumerated in \cite{0892.90188,1151.91021}. For asymptotic bounds we refer to \cite{0797.05004}.

\subsection{Our contribution}

We provide a shorter proof (compared to \cite{sub_freixas,arxix_freixas}) for the enumeration of complete simple games with two types of voters and an enumeration formula for complete simple games with two shift-minimal winning coalitions. Our main contribution is an algorithm (based on the parametric Barvinok algorithm) to determine formulas for the number of complete simple games with $t$ types of voters and $r$ shift-minimal winning coalitions in dependence of the number of voters. 

\subsection{Outline of the Paper}
In Section~\ref{sec_complete_simple_games} we provide the basic definitions for complete simple games concluding with a parameterization, see Theorem~\ref{thm_characterization_cs}. Section~\ref{sec_model_lattice_points} is devoted to the modeling process of complete simple games as integer points in polytopes and a very brief introduction into lattice point counting algorithms. Complete simple games with two types of voters will be discussed in Section~\ref{sec_two_types}. Next we describe an approach to determine enumeration formulas for the number of complete simple games before we end with a conclusion in Section~\ref{sec_conclusion}.

\section{Complete simple games}
\label{sec_complete_simple_games}

In this section we will define the crucial objects.

\subsection{Simple games or monotone boolean functions}

Let $N=\{1,\dots,n\}$ be a set of $n$ voters. By $2^N$ we denote the set $\{U\mid U\subset N\}$ of all subsets of $N$. The information whether the support of a proposal by a subset $Y$ of the voters suffices for its acceptance is condensed in a characteristic function $\chi\colon2^N\rightarrow\{0,1\}$.

\begin{definition}
A pair $(N,\chi)$ is called \textbf{simple game} if $\chi$ is a characteristic function of the subsets of $N$ with $\chi(\emptyset)=0$, $\chi(N)=1$, and $\chi(U')\le\chi(U)$ for all $U'\subseteq U$.
\end{definition}

So, each simple game is a monotone Boolean function and except for the all-zero function and the all-one function all monotone Boolean functions are simple games. We will call a subset $U$ of the set of voters $N$ a coalition.

\begin{definition}
A coalition $U\subseteq N$ of a simple game $(N,\chi)$ is called \textbf{winning} iff $\chi(U)=1$ and \textbf{losing} otherwise. By $W$ we denote the set $\left\{U\mid \chi(U)=1\right\}$ of all winning coalitions and by $L$ the set  $\left\{U\mid \chi(U)=0\right\}$ of all losing coalitions.
\end{definition}

Obviously, a simple game can be described by explicitly listing all winning coalitions or all losing coalitions. Since there are $2^n$ subsets of $N$ such a list could become quite large even for rather small values of $n$. So far, the property $\chi(U')\le\chi(U)$ for all $U'\subseteq U$ was not used to compress the data.

\begin{definition}
A coalition $U\subseteq N$ of a simple game $(N,\chi)$ is called a \textbf{minimal winning} coalition iff $\chi(U)=1$ and $\chi(U')=0$ for all
$U'\subsetneq U$. It is called a \textbf{maximal losing} coalition iff $\chi(U)=0$ and $\chi(U')=1$ for all $U\subsetneq U'$. By $\overline{W}$ we
denote the set of all minimal winning coalitions and by $\overline{L}$ the set of all maximal losing coalitions.
\end{definition}

We remark that the knowledge of either $\overline{W}$ or $\overline{L}$ suffices to reconstruct $W$, $L$, and $\chi$. In \cite{complexity} the complexity of changing the representation form of a simple game is studied. Note that the sets $\overline{W}$ and $\overline{L}$ are antichains, i.~e.{} no two elements are subsets of each other.

\begin{example}
\label{ex_first}
\begin{eqnarray*}
  W&=&\Big\{\{1,2\},\{1,2,3\},\{1,2,4\},\{1,2,3,4\},\{1,3\},\\
  &&\{1,4\},\{1,3,4\},\{2,3\},\{2,4\},\{2,3,4\}\Big\}
\end{eqnarray*}
So, there are $10$ winning coalitions. Clearly we have $L=2^N\backslash W$ and $|L|=6$. It is not hard to figure out the sets of
the minimal winning coalitions
$$
  \overline{W}=\Big\{\{1,2\},\{1,3\},\{1,4\},\{2,3\},\{2,4\}\Big\}
$$
and the maximal losing coalitions
$$
  \overline{L}=\Big\{\{1\},\{2\},\{3,4\}\Big\}.
$$
\end{example}

\subsection{Isbell's desirability relation}
As mentioned in the introduction, the monotonicity of simple games is a very weak requirement for voting systems. Now, we define the desirability relation introduced by Isbell in \cite{isbell} (using a different notation):
\begin{definition}
We write $i\sqsupset j$ (or $j \sqsubset i$) for two voters $i,j\in N$ iff we have $\chi\Big(\{i\}\cup U\backslash\{j\}\Big)\le\chi (U)$ for all $\{j\}\subseteq U\subseteq N\backslash\{i\}$ and we abbreviate $i\sqsupset j$, $j\sqsupset i$ by $i\square j$. A pair $(N,\chi)$ is called \textbf{complete simple game} (also called a ``directed game", see \cite{0841.90134}) if it is a simple game and the binary relation $\sqsupset$ is a total preorder, i.~e.{}
\begin{itemize}
  \item[(1)] $i\sqsupset i$ for all $i\in N$,
  \item[(2)] either $i\sqsupset j$ or $j\sqsupset i$ (including ``$i\sqsupset j$ and $j\sqsupset i$'') for all $i,j\in N$, and
  \item[(3)] $i\sqsupset j$, $j\sqsupset h$ implies $i\sqsupset h$ for all $i,j,h\in N$
\end{itemize}
holds. By $\mathbf{cs(n)}$ we denote the number of complete simple games for $n$~voters.
\end{definition}
If $i\sqsupset j$ we say that voter $i$ is at most as desirable as voter $j$ as a coalition partner. If $i\square j$ then voter $i$ is as desirable as voter $j$ as a coalition partner since $\chi(U\cup\{i\})=\chi(U\cup\{j\})$ for all $U\subseteq N\backslash\{i,j\}$. To factor out some symmetry, we partition the set of voters $N$ into subsets $N_1,\dots,N_t$ such that we have $i\square j$ for all $i,j\in N_h$, where $1\le h\le t$, and $i\square j$
implies the existence of an integer $1\le h\le t$ with $i,j\in N_h$. So in some sense the sets $N_i$ cover equally desirable voters. In Example \ref{ex_first} we have $t=2$, $N_1=\{1,2\}$ and $N_2=\{3,4\}$.

We would like to remark that this is also known in the field of Boolean algebra as Winder's preorder, see e.~g.{} \cite{0243.94014,phd_winder,0207.02101}.

Given these classes of equally desirable voters we can further compress the information on the minimal winning coalitions or maximal losing coalitions:
\begin{definition}
\label{def_winning_vector}
Let $(N,\chi)$ be a complete simple game and $N_i$ be the classes of equally desirable voters for $1\le i\le t$. We call a
vector $\widetilde{m}:=\begin{pmatrix}m_1&\dots&m_t\end{pmatrix}$, where $0\le m_i\le \left|N_i\right|$ for $1\le i\le t$, a
\textbf{winning coalition} iff $\chi(U)=1$, where $U$ is an arbitrary subset of $N$ containing exactly $m_i$ elements of $N_i$
for $1\le i\le t$. Analogously, we call such a vector a \textbf{losing coalition} iff $\chi(U)=0$, where $U$ is an arbitrary
subset of $N$ containing exactly $m_i$ elements of $N_i$ for $1\le i\le t$.
\end{definition}

Due to the definition of the sets $N_i$ the value of $\chi(U)$ does only depend on $\widetilde{m}$ and not on $U$. In Example \ref{ex_first} the winners are given by
$$
  \Big\{
  \begin{pmatrix}1&1\end{pmatrix},
  \begin{pmatrix}1&2\end{pmatrix},
  \begin{pmatrix}2&0\end{pmatrix},
  \begin{pmatrix}2&1\end{pmatrix},
  \begin{pmatrix}2&2\end{pmatrix}
  \Big\}
$$
and the losers are given by
$$
  \Big\{
  \begin{pmatrix}0&0\end{pmatrix},
  \begin{pmatrix}0&1\end{pmatrix},
  \begin{pmatrix}0&2\end{pmatrix},
  \begin{pmatrix}1&0\end{pmatrix}
  \Big\}.
$$

To factor out remaining symmetries, we require $i\sqsubset j$ for $i\in N_{h_1}$, $j\in N_{h_2}$, where $1\le h_1<h_2\le t$, and
$$N_i=\Big\{\sum_{j=1}^{i-1}\left|N_j\right|+1,\dots,\sum_{j=1}^{i}\left|N_j\right|\Big\},$$
which can always be achieved by rearranging the voters of a given complete simple game. With this, it suffices to know the cardinalities $n_i:=\left|N_i\right|$ instead of the explicit sets $N_i$.

\begin{definition}
  We call a pair $\left(\widetilde{n},\chi\right)$, where $\widetilde{n}=\begin{pmatrix}n_1&\dots&n_t\end{pmatrix}$ with
  $n_i\in\mathbb{N}_{>0}$ and $\sum_{i=1}^t n_i=:n$, a \textbf{complete simple game} if $(N,\chi)$ is a complete simple game
  and the $N_i$ are the classes of equally desirable voters for $1\le i\le t$, where $N$ and the $N_i$ are given as stated above.
  By $\mathbf{cs(n,t)}$, we denote the number of complete simple games with $t$ equivalence classes for $n$ voters.
\end{definition}

To carry over the concept of minimal winning coalitions and maximal losing coalitions to vectors, we need a partial ordering:

\begin{definition}
  \label{def_smaller_vector}
  For two vectors $\widetilde{a}=\begin{pmatrix}a_1&\dots&a_t\end{pmatrix}$ and
  $\widetilde{b}=\begin{pmatrix}b_1&\dots&b_t\end{pmatrix}$ we write $\widetilde{a}\preceq \widetilde{b}$ if and only if
  we have
  \[
    \sum_{i=1}^{k} a_i \le \sum_{i=1}^{k} b_i
  \]
  for all $1\le k\le t$. For $\widetilde{a}\preceq \widetilde{b}$ and $\widetilde{a}\neq \widetilde{b}$ we use
  $\widetilde{a}\prec\widetilde{b}$ as an abbreviation. If neither $\widetilde{a}\preceq \widetilde{b}$ nor
  $\widetilde{b}\preceq \widetilde{a}$ holds we write $\widetilde{a}\bowtie \widetilde{b}$.
\end{definition}
In words, we say that $\widetilde{a}$ is smaller than $\widetilde{b}$ if $\widetilde{a}\prec\widetilde{b}$ and that $\widetilde{a}$ and $\widetilde{b}$ are incomparable if $\widetilde{a}\bowtie \widetilde{b}$. For some background on this partial ordering in the literature, we refer the interested reader to \cite{integer_representation}. Like any partial order, $\preceq$ can be depicted by a Hasse diagram or a directed graph. In Figure \ref{fig_hasse_2_2} we have drawn the Hasse diagram for Example \ref{ex_first}. In such a Hasse diagram or directed graph, we have $\widetilde{a}\preceq\widetilde{b}$ if there is a (possibly empty) directed path from $\widetilde{a}$ to $\widetilde{b}$. We have $\widetilde{a}\bowtie\widetilde{b}$ if there is neither a directed path from $\widetilde{a}$ to $\widetilde{b}$ nor from $\widetilde{b}$ to $\widetilde{a}$.

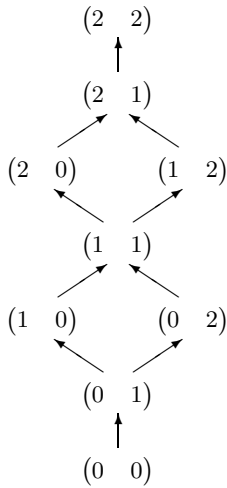
\begin{figure}[htp]
  \begin{center}
    \setlength{\unitlength}{1cm}
    \begin{picture}(3,7)
      \put(1,0){$\begin{pmatrix}0&0\end{pmatrix}$}
      \put(1,1){$\begin{pmatrix}0&1\end{pmatrix}$}
      \put(0,2){$\begin{pmatrix}1&0\end{pmatrix}$}
      \put(2,2){$\begin{pmatrix}0&2\end{pmatrix}$}
      \put(1,3){$\begin{pmatrix}1&1\end{pmatrix}$}
      \put(0,4){$\begin{pmatrix}2&0\end{pmatrix}$}
      \put(2,4){$\begin{pmatrix}1&2\end{pmatrix}$}
      \put(1,5){$\begin{pmatrix}2&1\end{pmatrix}$}
      \put(1,6){$\begin{pmatrix}2&2\end{pmatrix}$}
      \put(1.5,0.4){\vector(0,1){0.45}}
      \put(1.3,1.4){\vector(-3,2){0.65}}
      \put(1.7,1.4){\vector(3,2){0.65}}
      \put(0.7,2.4){\vector(3,2){0.65}}
      \put(2.3,2.4){\vector(-3,2){0.65}}
      \put(1.3,3.4){\vector(-3,2){0.65}}
      \put(1.7,3.4){\vector(3,2){0.65}}
      \put(0.7,4.4){\vector(3,2){0.65}}
      \put(2.3,4.4){\vector(-3,2){0.65}}
      \put(1.5,5.4){\vector(0,1){0.45}}
    \end{picture}
    \label{fig_hasse_2_2}
    \caption{The Hasse diagram for $\preceq$ on $(2\,\,\,2)$.}
  \end{center}
\end{figure}

With Definition \ref{def_smaller_vector} at hand, we can define:
\begin{definition}
A vector $\widetilde{m}:=\begin{pmatrix}m_1&\dots&m_t\end{pmatrix}$ in a complete simple game $\Big(\begin{pmatrix}n_1&\dots&n_t\end{pmatrix},\chi\Big)$ is a \textbf{minimal winning coalition}\footnote{Sometimes we more precisely speak of shift-minimal winning coalitions.} iff
$\widetilde{m}$ is a winning coalition and every coalition $\widetilde{m}'\prec\widetilde{m}$ is losing. Analogously,
a coalition $\widetilde{m}$ is a \textbf{maximal losing coalition}\footnote{Similarly we also speak of shift-maximal losing coalitions.} iff $\widetilde{m}$ is a losing coalition and every
coalition $\widetilde{m}'\succ\widetilde{m}$ is winning.
\end{definition}

The minimal winning coalitions of Example \ref{ex_first} are given by
$$
  \Big\{\begin{pmatrix}1&1\end{pmatrix}\Big\}
$$
and the maximal losing coalitions are given by
$$
  \Big\{\begin{pmatrix}1&0\end{pmatrix},\begin{pmatrix}0&2\end{pmatrix}\Big\},
$$
which both are quite short lists. Thus, a complete simple game can be represented by a vector $$\widetilde{n}=\begin{pmatrix}n_1&\dots&n_t\end{pmatrix}$$ and a matrix $$\mathcal{M}=\begin{pmatrix}m_{1,1}&m_{1,2}&\dots&m_{1,t}\\m_{2,1}&m_{2,2}&\dots&m_{2,t}\\
\vdots&\ddots&\ddots&\vdots\\m_{r,1}&m_{r,2}&\dots&m_{r,t}\end{pmatrix}=
\begin{pmatrix}\widetilde{m}_1\\\widetilde{m}_2\\\vdots\\\widetilde{m}_r\end{pmatrix}$$ of all
minimal winning coalitions $\widetilde{m}_i$. In Example \ref{ex_first} we have $\widetilde{n}=\begin{pmatrix}2&2\end{pmatrix}$ and $\mathcal{M}=\begin{pmatrix}1&1\end{pmatrix}$. In general, this is a very compact representation of a complete simple game
and we will use it in the remaining part of the paper.

In order to state an isomorphism-free parameterization of complete simple games, we need another total ordering, the so-called lexicographic ordering:
\begin{definition}
  \label{def_lexicographic}
  For two vectors $\widetilde{a}=\begin{pmatrix}a_1&\dots&a_t\end{pmatrix}$ and
  $\widetilde{b}=\begin{pmatrix}b_1&\dots&b_t\end{pmatrix}$ we write $\widetilde{a}\gtrdot \widetilde{b}$
  if and only if there is an integer $1\le h\le t$ such that $a_i=b_i$ for $i<h$ and $a_h>b_h$.
\end{definition}

\noindent
and a specification when we consider two complete simple games to be isomorphic:
\begin{definition}
  \label{def_isomorphism}
  Two complete simple games $\left(N_1,\chi_1\right)$ and $\left(N_2,\chi_2\right)$ are called \textbf{isomorphic} iff there is
  a bijection $\varphi:N_1\rightarrow N_2$ such that 
  $$
    \chi_1\Big(\Big\{u_1,\dots,u_k\Big\}\Big)=\chi_2\Big(\Big\{\varphi\left(u_1\right),\dots,\varphi\left(u_k\right)\Big\}\Big)
  $$
  for all subsets $\left\{u_1,\dots,u_k\right\}\subseteq N_1$.
\end{definition}

\subsection{A parameterization of complete simple games with t~types of voters}
The crucial characterization theorem for complete simple games using vectors as coalitions and the partial order~$\preceq$ was given by Carreras and Freixas in \cite{complete_simple_games}:
\begin{theorem}
  \label{thm_characterization_cs}

  \vspace*{0mm}

  \noindent
  \begin{itemize}
   \item[(a)] Given are a vector $$\widetilde{n}=\begin{pmatrix}n_1&\dots&n_t\end{pmatrix}\in\mathbb{N}_{>0}^t$$ and a matrix
              $$\mathcal{M}=\begin{pmatrix}m_{1,1}&m_{1,2}&\dots&m_{1,t}\\m_{2,1}&m_{2,2}&\dots&m_{2,t}\\
              \vdots&\ddots&\ddots&\vdots\\m_{r,1}&m_{r,2}&\dots&m_{r,t}\end{pmatrix}=
              \begin{pmatrix}\widetilde{m}_1\\\widetilde{m}_2\\\vdots\\\widetilde{m}_r\end{pmatrix}$$
              satisfying the following properties
              \begin{itemize}
               \item[(i)]   $0\le m_{i,j}\le n_j$, $m_{i,j}\in\mathbb{N}$ for $1\le i\le r$, $1\le j\le t$,
               \item[(ii)]  $\widetilde{m}_i\bowtie\widetilde{m}_j$ for all $1\le i<j\le r$,
               \item[(iii)] for each $1\le j<t$ there is at least one row-index $i$ such that
                            $m_{i,j}>0$, $m_{i,j+1}<n_{j+1}$ if $t>1$ and $m_{1,1}>0$ if $t=1$, and
               \item[(iv)]  $\widetilde{m}_i\gtrdot \widetilde{m}_{i+1}$ for $1\le i<r$.
              \end{itemize}
              Then, there exists a complete simple game $(N,\chi)$ associated to $\left(\widetilde{n},\mathcal{M}\right)$.
   \item[(b)] Two complete games $\left(\widetilde{n}_1,\mathcal{M}_1\right)$ and $\left(\widetilde{n}_2,\mathcal{M}_2\right)$
              are isomorphic if and only if $\widetilde{n}_1=\widetilde{n}_2$ and $\mathcal{M}_1=\mathcal{M}_2$.
  \end{itemize}
\end{theorem}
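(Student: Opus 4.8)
The plan is to prove part (a) by an explicit construction and part (b) by showing the data $(\widetilde n,\mathcal M)$ is a complete isomorphism invariant. For part (a), given $(\widetilde n,\mathcal M)$ satisfying (i)--(iv), I would first observe that the set $\mathcal{W}$ of vectors $\widetilde{a}$ with $0\le a_j\le n_j$ and $\widetilde{a}\succeq\widetilde{m}_i$ for some row $\widetilde m_i$ forms an up-set in the partial order $\preceq$ on the box $\prod_j\{0,\dots,n_j\}$. Define $\chi(U)=1$ iff the type-count vector of $U$ lies in $\mathcal{W}$. One must check: (1) $\chi(\emptyset)=0$, which follows because $\widetilde 0\not\succeq\widetilde m_i$ unless $\widetilde m_i=\widetilde 0$, and condition (iii) (together with $n_i>0$) forbids a zero row; (2) $\chi(N)=1$, which follows because $\widetilde n\succeq\widetilde m_i$ always, so we only need $r\ge 1$ — and if $r=0$ the game is degenerate, so $r\ge1$ is implicitly assumed; (3) monotonicity $\chi(U')\le\chi(U)$ for $U'\subseteq U$, which follows since $U'\subseteq U$ forces the type-count vector of $U'$ to be $\preceq$ that of $U$, and $\mathcal{W}$ is an up-set. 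Then one verifies that $\sqsupset$ is a total preorder with the classes $N_1,\dots,N_t$ exactly the equally-desirable classes: the key point is that $i\sqsupset j$ whenever $i\in N_{h_1}$, $j\in N_{h_2}$ with $h_1<h_2$, because replacing a type-$h_2$ voter by a type-$h_1$ voter (or vice versa) moves the count vector up (resp.\ down) in $\preceq$; transitivity and totality are then immediate from the corresponding properties of $\preceq$ on prefix-sums. Finally, one must check the classes do not accidentally merge, i.e.\ $i\square j$ fails across distinct types — this is where condition (iii) is used: it guarantees a witnessing coalition showing type $j$ is \emph{strictly} weaker than type $j+1$.

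For part (b), the ``only if'' direction is easy: an isomorphism $\varphi$ respects $\chi$, hence respects $\sqsupset$ and maps desirability classes to desirability classes preserving the strict order between them (by condition (iii) again, consecutive classes are strictly ordered, so their sizes and order are determined), giving $\widetilde n_1=\widetilde n_2$; and since the winning count-vectors are determined, so is the up-set $\mathcal{W}$, hence its set of $\preceq$-minimal elements, which — after imposing the normalization (iv) that rows are sorted by $\gtrdot$ — is exactly $\mathcal M$, so $\mathcal M_1=\mathcal M_2$. The ``if'' direction is trivial: if $\widetilde n_1=\widetilde n_2$ and $\mathcal M_1=\mathcal M_2$, the construction of part (a) produces literally the same $\chi$ on count-vectors, and any type-respecting bijection $N_1\to N_2$ is an isomorphism. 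One subtlety to address: the matrix $\mathcal M$ must be shown to coincide with the set of shift-minimal winning coalitions of the constructed game, not merely to generate the same up-set; condition (ii) ($\widetilde m_i\bowtie\widetilde m_j$) ensures no row dominates another, so every row is genuinely $\preceq$-minimal in $\mathcal W$, and conversely every minimal element of $\mathcal W$ must equal some $\widetilde m_i$ (otherwise it would be strictly above one of them, contradicting minimality or the definition of $\mathcal W$).

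The main obstacle I expect is not the up-set/monotonicity bookkeeping but rather the careful verification in part (a) that the equally-desirable classes of the constructed game are \emph{exactly} $N_1,\dots,N_t$ with the prescribed sizes $n_i$ — i.e.\ that no two prescribed classes collapse and no prescribed class splits. That two voters in the same $N_h$ satisfy $i\square j$ is clear because swapping them never changes a count vector. The delicate part is the converse — showing $i\in N_{h_1},\,j\in N_{h_2},\,h_1\ne h_2$ implies $\lnot(i\square j)$ — and this is precisely the role of hypothesis (iii): it hands us, for each $j$, a winning vector with a positive $j$-th coordinate and a non-full $(j{+}1)$-th coordinate, so shifting one unit from type $j$ to type $j{+}1$ keeps it winning while the reverse shift makes a specific losing vector, witnessing $j{+}1\sqsupset j$ strictly; transitivity of $\sqsupset$ then propagates strictness across all pairs of distinct types. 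I would therefore structure the proof of (a) as: (1) define $\chi$ via the up-set; (2) check the three simple-game axioms; (3) compute the relation $\sqsupset$ in terms of $\preceq$ on prefix sums and verify it is a total preorder; (4) use (iii) to pin down the classes; and I would reference \cite{complete_simple_games} for the parts that are routine, spending the written detail on step (4) and on the identification of $\mathcal M$ with the shift-minimal winning coalitions in (b).
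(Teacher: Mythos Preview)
The paper does not prove this theorem at all: it is quoted from Carreras and Freixas \cite{complete_simple_games}, and the paragraph that follows it in the paper is only an informal explanation of why conditions (i)--(iv) are \emph{necessary} for the data $(\widetilde n,\mathcal M)$ coming from a complete simple game. There is no argument in the paper for sufficiency (that such data actually produces a complete simple game with exactly the prescribed equivalence classes and shift-minimal winners), nor for the uniqueness direction of (b). So your proposal is not a variant of the paper's proof --- it is a genuine proof sketch where the paper offers none.

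Your outline is sound. The construction of $\chi$ via the $\preceq$-up-set generated by the rows is the right one; monotonicity and the simple-game axioms follow as you say; totality of $\sqsupset$ follows because swapping a voter of type $h_2$ for one of type $h_1<h_2$ raises the count vector in $\preceq$; and condition (ii) is exactly what guarantees that the rows of $\mathcal M$ are the $\preceq$-minimal elements of the up-set (your last paragraph handles this correctly). One small slip: in your use of condition (iii) you write that ``shifting one unit from type $j$ to type $j{+}1$ keeps it winning while the reverse shift makes a specific losing vector''. It is the other way around. Starting from the row $\widetilde m_i$ with $m_{i,j}>0$ and $m_{i,j+1}<n_{j+1}$, moving one unit from coordinate $j$ to coordinate $j{+}1$ gives a vector $\widetilde m_i'\prec\widetilde m_i$; if $\widetilde m_i'$ were still winning it would dominate some row $\widetilde m_k$ with $k\neq i$, whence $\widetilde m_i\succ\widetilde m_i'\succeq\widetilde m_k$, contradicting (ii). So this shift turns a winning vector into a losing one and witnesses that type $j{+}1$ is \emph{strictly} less desirable than type $j$, which is precisely what you need to prevent the classes from merging. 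With that correction (and the observation that transitivity of $\sqsupset$ then gives strictness across all pairs of distinct types), your plan goes through.
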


\noindent
In such a vector/matrix representation of a complete simple game, the number of voters $n$ is determined by $n=\sum\limits_{i=1}^t n_i$. Although Theorem \ref{thm_characterization_cs} looks technical at first glance, the necessity of the required properties can be explained easily. Obviously,  $n_j\ge 1$ and $0\le m_{i,j}\le n_j$ must hold for $1\le i\le r$, $1\le j\le t$. If $\widetilde{m}_i\preceq\widetilde{m}_j$ or $\widetilde{m}_i\succeq\widetilde{m}_j$ then we would have $\widetilde{m}_i=\widetilde{m}_j$ or either $\widetilde{m}_i$ or $\widetilde{m}_j$ cannot be a minimal winner. If for a column-index $1\le j<t$ we have $m_{i,j}=0$ or $m_{i,j+1}=n_{j+1}$ for all $1\le i\le r$, then we can check whether we have $g\square h$ for all $g\in N_j$, $h\in N_{j+1}$, which is a contradiction to the definition of the classes $N_j$ and therefore also for the numbers $n_j$. Obviously, a complete simple game does not change if two rows of the matrix $\mathcal{M}$ are interchanged. Thus, we can require an arbitrary ordering of the rows.

In the following we will denote by $cs(n,t,r)$ the number of complete simple games with $t\le n$ equivalence classes of the $n$ voters and $r$ shift-minimal winning coalitions.

\begin{table}[htp]
\begin{center}
\begin{tabular}{rrrrrrrrrrrrrrrr}
        \hline
        \!$\mathbf{n}$\!&\!\!1\!\!&\!\!2\!\!&\!\!3\!\!&\!\!4\!\!&\!\!5\!\!&\!\!6\!\!&\!\!7\!\!&\!\!8\!\!&\!\!9\!\!&\!\!10\!\!
        &\!\!11\!\!&\!\!12\!\!&\!\!13\!\!&\!\!14\!\!&\!\!15\!\!\\
        \!$\mathbf{max}$\!&\!\!1\!\!&\!\!1\!\!&\!\!2\!\!&\!\!2\!\!&\!\!3\!\!&\!\!5\!\!&\!\!8\!\!&\!\!14\!\!&\!\!23\!\!&
        \!\!40\!\!&\!\!70\!\!&\!\!124\!\!&\!\!221\!\!&\!\!397\!\!&\!\!722\!\!\\
        \hline
\end{tabular}
\caption{The maximum number of shift-minimal winning coalitions.}
\label{table_max_r}
\end{center}
\end{table}

We would like to remark that the maximum number $r$ of shift-minimal winning coalitions can become quite large, see Table~\ref{table_max_r} for the very first numbers. Their number indeed equals
\begin{equation*}
  \max_{1\le h\le n,\,k} \left|\left\{\left(a_1,\dots,a_h\right)\mid
  1\le a_1<\dots<a_h\le n,\,\sum_{i=1}^h a_i=k\right\}\right|,
\end{equation*}
see \cite{0841.90134}.

\section{Complete simple games as lattice points in a polytope}
\label{sec_model_lattice_points}

Now, we have a look at the vector/matrix representation of a complete simple game from a different point of view. In principle, for given parameters $n$, $t$, and $r$, a complete simple game can be described by $(r+1)t$ integers fulfilling certain conditions. If these conditions can be reformulated as linear inequalities, complete simple games with parameters $n$, $t$, and $r$ are in bijection to lattice points of a certain rational polytope, i.~e.{} a polytope where the coordinates of all corners are rational numbers. There is a profound theory on the enumeration of lattice points. The most important result is that for fixed dimension the enumeration can be done in polynomial time in terms of the input size. We will go into the details in the next section.

We would like to remark that for $t=1$ only $r=1$ is possible and the requirements reduce to $1\le m_{1,1}\le n_1=n$. Also for $t=2$ one can easily give a more compact formulation for the requirements in Theorem \ref{thm_characterization_cs}. A complete description of the possible values $n_1,n_2,m_{1,1},m_{1,2}$ corresponding to a complete simple game with parameters $n$, $t=2$, and $r=1$ is given by
\begin{eqnarray}
  && 1\le n_1\le n-1,\label{compact_ilp_2_1}\\
  && n_1+n_2=n,\nonumber\\
  && 1\le m_{1,1} \le n_1,\nonumber\\
  && 0\le m_{1,2}\le n_2-1.\nonumber
\end{eqnarray}
For $t=2$ and $r\ge 2$ such a complete and compact description is given by
\begin{eqnarray}
  && 1\le n_1\le n-1,\label{compact_ilp_2_ge_2}\\
  && n_1+n_2 = n,\nonumber\\
  && m_{i,1}\ge m_{i+1,1}+1\quad\quad\quad\quad\quad\quad\quad\quad\,\forall1\le i\le r-1,\nonumber\\
  && m_{i,1}+m_{i,2}+1\le m_{i+1,1}+m_{i+1,2}\quad\forall1\le i\le r-1.\nonumber
\end{eqnarray}
Thus, for $t\le 2$ we are able to describe the feasible values of the $n_i$ and the $m_{i,j}$ by linear constraints.

\subsection{Complete simple games as integer points in the stable set polytope}

We can easily describe complete simple games as cliques in a suitable graph as follows: Let $\mathcal{G}_n$ be a graph
consisting of the vertices $\{0,1\}^n\backslash 0$ where two vertices $v_i$, $v_j$ are joined by an edge if either $v_i\preceq v_j$
or $v_i\succeq v_j$. The number of (non-trivial) stable sets\footnote{or independent sets} of $\mathcal{G}_n$ equals the number $cs(n)$ of complete simple games with $n$ voters. Thus, we may describe the set of complete simple games as integer points of the stable set polytope of $\mathcal{G}_n$. By counting the vertices of $\mathcal{G}_n$, we obtain
\begin{equation}
  \sum\limits_{t=1}^n cs(n,t,1)=2^n-1.
\end{equation}
\begin{theorem}
\begin{eqnarray*}
  \!\!\!\!\!\!&&\sum\limits_{t=1}^n cs(n,t,2)=2\cdot\left(4^n+2^n\right)\cdot\\
  \!\!\!\!\!\!&&\left(6\cdot\frac{8n-4n^2-3}{n(n-3)}{2n-5\choose n-4}+\frac{2n^2+3n-2}{(n+1)(n-2)}{2n-3\choose n-3}\right)
\end{eqnarray*}
\end{theorem}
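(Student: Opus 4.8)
\noindent\emph{Proof proposal.}

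\medskip
\noindent
The plan is to count $2$-element antichains of the poset $\bigl(\{0,1\}^n\setminus\{0\},\preceq\bigr)$, i.e.\ $2$-element stable sets of $\mathcal G_n$. By the correspondence recalled just before the statement (non-trivial stable sets of $\mathcal G_n$ $\leftrightarrow$ complete simple games on $n$ voters, the cardinality of a stable set being the number of shift-minimal winning coalitions),
\[
  \sum_{t=1}^{n}cs(n,t,2)=\#\bigl\{\{u,v\}\ :\ u,v\in\{0,1\}^n\setminus\{0\},\ u\neq v,\ u\bowtie v\bigr\}.
\]
I would orient each such pair by the lexicographic order of Definition~\ref{def_lexicographic}, say $u\gtrdot v$, and cut both vectors at their first position $h$ of disagreement: the common prefix $(u_1,\dots,u_{h-1})=(v_1,\dots,v_{h-1})$ is arbitrary in $\{0,1\}^{h-1}$, one has $u_h=1$, $v_h=0$, and the tails $a=(u_{h+1},\dots,u_n)$, $b=(v_{h+1},\dots,v_n)$ are arbitrary in $\{0,1\}^{s}$ with $s=n-h$. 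Writing $U,V,A,B$ for prefix sums, $U(k)=V(k)$ for $k<h$ and $U(h)=V(h)+1$, so $u\not\preceq v$ holds automatically; hence $u\bowtie v$ reduces to $v\not\preceq u$, which after cancelling the first $h$ positions says exactly that $B(m)\ge A(m)+2$ for some $1\le m\le s$. Since this forces $b\neq0$ (hence $v\neq0$), no further condition is needed, and summing over the $2^{h-1}$ prefixes gives
\[
  \sum_{t=1}^{n}cs(n,t,2)=\sum_{h=1}^{n}2^{h-1}g(n-h),\qquad
  g(s)=\#\bigl\{(a,b)\in(\{0,1\}^{s})^2:\ \exists m,\ B(m)\ge A(m)+2\bigr\}.
\]

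\medskip
\noindent
To evaluate $g(s)$ I would pass to the difference walk $D(m)=B(m)-A(m)$, which starts at $0$ and at each of its $s$ steps changes by $b_m-a_m\in\{-1,0,+1\}$, the value $0$ occurring in two of the four cases; thus these are weighted lattice paths with $\pm1$-steps of weight $1$ and level steps of weight $2$, and the number of them running from $0$ to height $j$ is $\binom{2s}{s+j}$ because $(x+2+x^{-1})^s=x^{-s}(1+x)^{2s}$. Now $g(s)$ is the number of such paths that ever attain height $2$; applying the reflection principle at the first visit to level $2$ shows that the complementary paths, those staying weakly below $1$, number exactly $\binom{2s}{s-2}+2\binom{2s}{s-1}+\binom{2s}{s}=\binom{2s+2}{s}$, so $g(s)=4^{s}-\binom{2s+2}{s}$.

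\medskip
\noindent
It then remains to sum the series. The geometric part is $\sum_{h=1}^{n}2^{h-1}4^{n-h}=2^{n-1}(2^{n}-1)$, hence
\[
  \sum_{t=1}^{n}cs(n,t,2)=2^{n-1}(2^{n}-1)-\Sigma(n),\qquad
  \Sigma(n)=\sum_{s=0}^{n-1}2^{\,n-1-s}\binom{2s+2}{s}.
\]
The sequence $\Sigma(n)$ obeys the first-order recurrence $\Sigma(n)=2\Sigma(n-1)+\binom{2n}{n-1}$ (whose last term is the $s=n-1$ summand), which I would resolve either by creative telescoping of the inhomogeneity against a central binomial or by reading off coefficients of $\tfrac{1}{1-2x}\bigl(\tfrac{1}{\sqrt{1-4x}}-\tfrac{1-\sqrt{1-4x}}{2x}\bigr)$, obtaining $\Sigma(n)=\binom{2n+1}{n}-2^{n}$. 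Substituting back gives
\[
  \sum_{t=1}^{n}cs(n,t,2)=2^{2n-1}+2^{n-1}-\binom{2n+1}{n}=\tfrac12\bigl(4^{n}+2^{n}\bigr)-\binom{2n+1}{n},
\]
which is the announced closed form (after rewriting $\binom{2n+1}{n}$ through the displayed rational multiples of $\binom{2n-5}{n-4}$ and $\binom{2n-3}{n-3}$). The step I expect to be most delicate is the combinatorial reduction itself: the two observations that, once the pair is oriented by $\gtrdot$, incomparability collapses to the \emph{single} one-sided condition ``$b$'s prefix sums lead $a$'s by $2$'', and that $g(s)$ must be counted with the reflection taken at level $2$ (not $1$); granting these, the identification of $\Sigma(n)$ is a routine hypergeometric summation. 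As an aside, the same formula is reached along the lines of the paper's general algorithm: for each $t$ one encodes conditions (i)--(iv) of Theorem~\ref{thm_characterization_cs} with $r=2$ by linear inequalities, splits the non-linear condition (ii) into the finitely many sub-cases recording where $\widetilde m_2$ overtakes $\widetilde m_1$, runs the parametric Barvinok algorithm on each polytope family, and sums the quasi-polynomial outputs over $t$.
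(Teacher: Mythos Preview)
Your proof is correct and shares the paper's opening move: read $\sum_t cs(n,t,2)$ as the number of edges of $\overline{\mathcal G_n}$, orient each incomparable pair by $\gtrdot$, and factor out the maximal common prefix. From there the two arguments diverge. The paper makes a \emph{second} cut, at the first index where the partial sums of $v$ overtake those of $u$; this produces a three-block decomposition (a free prefix of length $i$, a constrained middle counted by a Dyck-path quantity $f(k)=\frac{4}{k+2}\binom{2k-1}{k-2}$, and a completely free suffix of length $j$), and the resulting double sum $\sum_i\sum_j 2^i\,4^j\,f(n-i-j-1)$ is then closed with \texttt{Maple}. You instead keep the whole tail in one piece and evaluate it by the reflection principle for weighted Motzkin paths, obtaining the compact $g(s)=4^s-\binom{2s+2}{s}$; the single remaining convolution then telescopes by hand to $\tfrac12(4^n+2^n)-\binom{2n+1}{n}$. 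So the combinatorial reduction is the same, but your evaluation is more elementary: it trades the paper's extra structural parameter and computer-algebra step for one clean reflection, and it delivers the answer in a visibly simpler closed form than the one displayed in the statement.
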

\begin{proof}
We count the number of edges $\{u,v\}$ of the complement $\overline{\mathcal{G}_n}$ of $\mathcal{G}_n$, where $u\gtrdot v$.
Suppose that the first $i$~coordinates of $u$ and $v$ coincide, $u_{i+1}=1$, and $v_{i+1}=0$. Now let $n-j-1$ be the first index such that $\sum_{h=1}^{n-j-1} v_h>\sum_{h=1}^{n-j-1} u_h$. Thus we have $u_{n-j-1}=0$,  $v_{n-j-1}=1$, and $\sum_{h=1}^{n-j-2} u_h=\sum_{h=1}^{n-j-2} v_h$. The remaining $j$ coordinates are arbitrary. This gives $\sum_{t=1}^n cs(n,t,2)=$
$$
  \sum\limits_{i=0}^{n-3}\sum\limits_{j=0}^{n-3-i} 
  \!2^i f(n-i-j-1)\cdot 4^j
  =\sum\limits_{i=0}^{n-3} 2^i\left(2^{i+1}-1\right)f(n-i-1),
$$
where $f(k)$ counts the number of sequences $u',v'\in\{0,1\}^{k}$ such that $\sum_{h=1}^{i} u'_h\ge \sum_{h=1}^{i} v'_h$
for all $1\le i\le k$ and $\sum_{h=1}^{k} u'_h= \sum_{h=1}^{k} v'_h$. If we consider only the coordinates of $u'$ and $v'$ which
differ, we obtain Dyck paths, which are counted by the Catalan numbers. The remaining coordinates are equal but arbitrary. Thus we get
$$
  f(k)=\sum\limits_{i=1}^{\left\lfloor\frac{k}{2}\right\rfloor}
  \frac{1}{i+1}{2i\choose i}\cdot{k-1\choose 2i-1}\cdot 2^{k-2i}
  =\frac{4}{k+2}{2k-1\choose k-2}
$$
and apply a last time a computer algebra package like \texttt{Maple 11} for the remaining
summation.
\end{proof}

For a fixed number of voters $n$ one can apply a software package like \texttt{cliquer} \cite{cliquer,1019.05054} in order to exhaustively generate all cliques of $\overline{\mathcal{G}_n}$. see Table~\ref{table_csg_cliquer}. The corresponding values of $t$ and $r$ can be easily determined for each clique separately.

\begin{table}[htp]
\begin{center}
\begin{tabular}{crrrrrrrrr}
\hline
$\!\!\!\mathbf{n}\!\!\!$&$\!\!\!1\!\!\!$&$\!\!\!2\!\!\!$&$\!\!\!3\!\!\!$&$\!\!\!4\!\!\!$&$\!\!\!5\!\!\!$&$\!\!\!6\!\!\!$&
$\!\!\!7\!\!\!$&$\!\!\!8\!\!\!$&$\!\!\!9\!\!\!$\\
$\!\!\!\mathbf{cs(n)}\!\!\!$&$\!\!\!1\!\!\!$&$\!\!\!3\!\!\!$&$\!\!\!8\!\!\!$&$\!\!\!25\!\!\!$&$\!\!\!117\!\!\!$&$\!\!\!1171\!\!\!$&
$\!\!\!44313\!\!\!$&$\!\!\!16175188\!\!\!$&$\!\!\!284432730174\!\!\!$\\
$\!\!\!$\textbf{time}$\!\!\!$&&&&&&&$\!\!\!$0.02~s$\!\!\!$&$\!\!\!$1.23~s$\!\!\!$&$\!\!\!$44:01~m$\!\!\!$\\
\hline
\end{tabular}
\caption{Complete simple games for $\mathbf{n}$ voters.}
\label{table_csg_cliquer}
\end{center}
\end{table}

\subsection{Counting integer points in polytopes}
\label{subsec_counting_integer_points}

In this section we give a very short introduction into the theory of counting integer points in polytopes. For a report on the state-of-the-art we refer to \cite{enumerator} and for an accessible survey on applications of lattice-point enumeration problems we refer to \cite{1093.52006}.

In 1983 Lenstra has achieved a major breakthrough by providing an algorithm which allows to decide whether a given rational polytope $P=\left\{x\in\mathbb{R}^m\mid Ax\le b\right\}$, i.~e.{} where the entries of $A$ and $b$ are rational numbers, contains a lattice point in polynomial time for every fixed dimension $m$ \cite{0524.90067}. The next breakthrough came in 1994 when Barvinok introduced an algorithm for determining the exact number of lattice points in a rational polytope, that runs in polynomial time for every fixed dimension $m$ \cite{0821.90085}.

Here so-called generating functions were utilized for the counting process. We give a brief example in dimension one. Let $[a,b]$ be an interval with rational endpoints $a$ and $b$. The corresponding polytope is given by $$P=\left\{x\in\mathbb{R}^1\mid x\le b,\, -x\le -a\right\}.$$ The polynomial
\begin{equation}
\chi(P,x)=\sum\limits_{i\in P\cap\mathbb{Z}^1} x^i
\end{equation}
is some kind of a characteristic function for the lattice points of $P$, i.~e.{} every lattice point corresponds to a monomial. Using rational functions, even very large polytopes can be written very compactly. Let us have an example: For $a=5$ and $b=2945$ we have $\chi(P,x)=\sum\limits_{i=5}^{2945} x^i=x^5\cdot \frac{x^{2941}-1}{x-1}$ using the geometric series. By evaluating $\chi(P,x)$ at $x=1$ we obtain the number of lattice points in $P$. For higher dimensions polynomials and rational functions in several variables are used. For the mathematical and algorithmic details we refer e.~g.{} to \cite{1114.52013,1137.52303}.

By computing the so-called Ehrhart series of a rational polytope $P$, one can even determine a formula for the number of lattice points in the dilation $nP$. Parts of the well established theory on counting integer points in rational polytopes are rediscovered by researchers in social choice theory. For applications in this area and an explanation of the basic principles we refer e.~g.{} to \cite{1149.91028,1141.91379}.

Here, we will give an exemplary application and example from \cite{1149.91028}. At first we need some definitions.
\begin{definition}
A \textbf{rational periodic number} $f(p)$ is a function $\mathbb{Z}\rightarrow\mathbb{Q}$, such that there is a \textbf{period} $q$ such that $f(p)=f(p')$ whenever $p\equiv p'\pmod q$.
\end{definition}
Ehrhart used a list of $q$ rational numbers 
$$
  \Big[f_1,\dots,f_q\Big]_p
$$
enclosed in square brackets to represent the periodic number $f(p)=f_i$ for $p\equiv i\pmod q$ \cite{0337.10019}. For a more compact representation of periodic numbers we refer to \cite{enumerator}.
\begin{definition}
A quasi-polynomial of degree $m$ is a function
$$
  f(n)=a_d(n)n^d+\dots a_1(n)n+a_0(n),
$$
where the $a_i$ are periodic numbers. The least common multiple of the periods of the coefficients $a_i$ is called the period of $f$.
\end{definition}
Ehrhart's main theorem \cite{0337.10019} is:
\begin{theorem}
Let $P\subseteq \mathbb{Q}^m$ be a rational polytope. The number of lattice points in the dilations $nP$ with $n\in\mathbb{N}$ is given by a degree-$m$ quasi-polynomial where the period is a divisor of the least common multiple of the denominators of the vertices of $P$.
\end{theorem}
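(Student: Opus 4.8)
The plan is to reduce the statement to the case of a rational simplex and, for a simplex, to compute the Ehrhart series $\sum_{n\ge 0}\left|nP\cap\mathbb{Z}^m\right|t^n$ explicitly as a rational function whose denominator is a power of $1-t^D$, where $D$ denotes the least common multiple of the denominators of the coordinates of the vertices of $P$ (I assume $P$ is $m$-dimensional; in general the degree below is $\dim P$). A partial-fraction expansion of this rational function then reads off the quasi-polynomial together with the claimed bound on its period.

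First I would fix a triangulation of $P$ that introduces no new vertices (a pulling triangulation works). Every point of $P$ lies in the relative interior of a unique face of this triangulation, so $L_P(n):=\left|nP\cap\mathbb{Z}^m\right|$ is the sum over all faces $F$ of the number of lattice points in the $n$-th dilate of the relative interior of $F$; by M\"obius inversion on the Boolean face lattice of each simplex these relatively open counts are $\mathbb{Z}$-linear combinations of the ordinary counts $L_F(n)$ of the closed faces. Every such $F$ is a rational simplex whose vertices still lie in $\tfrac1D\mathbb{Z}^m$, so it suffices to prove, for a $d$-dimensional rational simplex $\Delta$ with vertices in $\tfrac1D\mathbb{Z}^m$, that $L_\Delta$ is a quasi-polynomial of degree $d$ with period dividing $D$. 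Summing back over the triangulation keeps the period a divisor of $D$, and the leading coefficients of the top-dimensional simplices are their volumes and cannot cancel, so the degree of $L_P$ equals $m$.

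For the simplex I would lift to height one, setting $\hat v_i=(v_i,1)\in\mathbb{R}^{m+1}$ and $C=\operatorname{cone}(\hat v_0,\dots,\hat v_d)$; the lattice points of $C$ with last coordinate $n$ are precisely those of $n\Delta$, so $\sum_{z\in C\cap\mathbb{Z}^{m+1}}t^{z_{m+1}}=\sum_{n\ge 0}L_\Delta(n)t^n$. Let $u_i$ be the primitive lattice vector on the ray through $\hat v_i$; its last coordinate is an integer $q_i$ dividing $D$ (this is exactly where the denominators of the vertices enter). Since the $u_i$ form a lattice basis of $\mathbb{R}^{m+1}$ (restricting to the relevant coordinate sublattice if $d<m$), every lattice point of $C$ is uniquely $p+\sum_i k_iu_i$ with $k_i\in\mathbb{Z}_{\ge 0}$ and $p$ in the half-open parallelepiped $\left\{\sum_i\nu_iu_i:0\le\nu_i<1\right\}$, which is bounded and hence contains finitely many lattice points. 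Hence $\sum_{n\ge0}L_\Delta(n)t^n=\frac{Q(t)}{\prod_{i=0}^d(1-t^{q_i})}$ with $Q$ a polynomial of degree $<\sum_iq_i$, so a proper rational function; passing to the common denominator $(1-t^D)^{d+1}$ keeps it proper, so its partial-fraction expansion has no polynomial part, all its poles are at $D$-th roots of unity with multiplicity at most $d+1$, and $[t^n](1-\omega t)^{-j}=\binom{n+j-1}{j-1}\omega^n$ is a polynomial of degree $j-1$ in $n$ times the period-dividing-$D$ sequence $\omega^n$. This exhibits $L_\Delta(n)$, for every $n\ge 0$, as a quasi-polynomial of degree $\le d$ and period dividing $D$; because $1-t^D$ vanishes to first order at $t=1$, the order of the pole at $t=1$ is exactly $d+1$, and its residue is nonzero since $L_\Delta(n)\sim\operatorname{vol}_d(\Delta)\,n^d$, forcing the degree to be $d$.

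The argument is conceptually straightforward; the points requiring care are (i) checking that the exponents $q_i$ in the denominator genuinely divide $D$, which is what ties the period to the denominators of the vertices and goes through the primitive ray generators of the lifted cone, and (ii) keeping the rational function proper so that the quasi-polynomial represents $L_\Delta(n)$ for all $n\ge 0$, not merely for large $n$. The reduction step also quietly uses the standard facts that a triangulation on the prescribed vertex set exists and that relatively open lattice-point counts reduce to closed ones by inclusion--exclusion on each simplex's face lattice.
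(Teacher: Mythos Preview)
The paper does not prove this theorem at all; it simply states it as ``Ehrhart's main theorem'' and cites Ehrhart's book \cite{0337.10019}. There is therefore no proof in the paper to compare your proposal against.

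Your argument is the standard modern proof (triangulate, lift each simplex to a simplicial cone, tile the cone by translates of the half-open fundamental parallelepiped, read off the Ehrhart series as a rational function, and extract the quasi-polynomial by partial fractions), and it is essentially correct. One phrasing should be tightened: the primitive generators $u_i$ need not form a \emph{lattice basis} of $\mathbb{Z}^{m+1}$ (or of the sublattice in their span); they are merely linearly independent lattice vectors. That is already enough for the unique decomposition $z=p+\sum_i k_i u_i$ with $p$ in the half-open parallelepiped and $k_i\in\mathbb{Z}_{\ge 0}$, since subtracting the integer parts of the cone coordinates of $z$ lands you on a lattice point of the parallelepiped. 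The possible failure of the $u_i$ to generate the ambient lattice is exactly what makes the numerator $Q(t)$ nontrivial, so calling them a lattice basis would trivialize the very thing you are computing. With that wording fixed, your verification that each $q_i$ divides $D$, the passage to the common denominator $(1-t^D)^{d+1}$, and the degree/period analysis are all sound.
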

So let us consider the polytope $P\in\mathbb{R}^2$ given by the inequalities $x_1+x_2\le 3$, $2x_1\le 5$, $x_1\ge 0$, and $x_2\ge 0$. The vertices of $P$ are given by $(0,0)$, $\left(\frac{5}{2},0\right)$, $(0,3)$, and $\left(\frac{5}{2},\frac{1}{2}\right)$. Thus the number $f(P,n)$ of lattice points of the polytope $nP=\left\{x\in\mathbb{R}^2\mid x_1+x_2\le 3n,\,2x_1\le 5n,\,x_1\ge 0,\,x_2\ge 0\right\}$ is a quasi-polynomial of degree $2$ with period $2$. Indeed we have
$$
  f(P,n)=\frac{35}{8}n^2+\left[\frac{17}{4},4\right]_n\cdot n+\left[1,\frac{5}{8}\right]_n.
$$

This technique was extended to so-called parametric polytopes, see e.~g.{} \cite{1017.05008,enumerator}. To put it in a nutshell, 
one can say: if $P\subseteq\mathbb{R}^{m_1+m_2}$ is a polyhedron such that for every admissible integer vector $p\in\mathbb{Z}^{m_2}$ of the last coordinates, $P$ with fixed last coordinates is a polytope, then one can compute a multivariate quasi-polynomial for the number of integer points in $P$ in dependence of the parameter vector $p$. 

Thus, one aims to formulate the set of feasible points of a counting problem via a rational polytope $P$.

\section{Complete simple games for two types of voters}
\label{sec_two_types}

Using the linear systems of inequalities~(\ref{compact_ilp_2_1}), (\ref{compact_ilp_2_ge_2}), the theory from Subsection~\ref{subsec_counting_integer_points}, and the software package \texttt{barvinok} \cite{enumerator}, we were able to determine formulas for $cs(n,2,r)$, where $r\le 10$. (It took less than $8$~hours of computation time to compute $cs(n,2,8)$, $63$~hours for $cs(n,2,9)$, and $539$~hours for $cs(n,2,10)$.)

These calculations are doable for small $r$; however, for this special case of $t=2$ one can determine a general formula as follows. 
\begin{lemma}
Each complete simple game, given by $\begin{pmatrix}n_1&n_2\end{pmatrix}$, $\begin{pmatrix}
m_{0,1}&m_{0,2}\\
\vdots&\vdots\\
m_{r,1}&m_{r,2}
\end{pmatrix}$,
with two types of voters, i.~e.{} $t=2$, and $r+1\ge 2$ shift-minimal winning coalitions can be written as
$$
\begin{array}{c}
  \begin{pmatrix}r+\sum\limits_{j=0}^{r-1}x_j+y_{r+1}+z_1&2r+\sum\limits_{j=0}^{r-1}x_j+\sum\limits_{j=0}^{r}y_j+z_2\end{pmatrix}\\
  \begin{pmatrix}
    r+\sum\limits_{j=0}^{r-1}x_j+y_{r+1}&0+y_0\\
    r-1+\sum\limits_{j=1}^{r-1}x_j+y_{r+1}&2+x_0+y_0+y_1\\
    \vdots&\vdots\\
    r-i+\sum\limits_{j=i}^{r-1}x_j+y_{r+1}&2i+\sum\limits_{j=0}^{i-1}x_j+\sum\limits_{j=0}^{i}y_j\\
    \vdots&\vdots\\
    1+x_{r-1}+y_{r+1}&2r-2+\sum\limits_{j=0}^{r-2}x_j+\sum\limits_{j=0}^{r-1}y_j\\
    0+y_{r+1}&2r+\sum\limits_{j=0}^{r-1}x_j+\sum\limits_{j=0}^{r}y_j
  \end{pmatrix}
\end{array}\!\!\!,
$$
where $x_0,\dots,x_{r-1},y_0,\dots,y_{r+1},z_1,z_2$ are non-negative integers
fulfilling
\begin{equation}
  \sum\limits_{i=0}^{r-1}2x_i\,+\,\sum\limits_{i=0}^{r+1}y_i\,+\,z_1\,+\,z_2\,=\,n-3r.
  \label{eq_two_types_param}
\end{equation}
\end{lemma}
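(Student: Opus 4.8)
The plan is to exhibit the displayed formulas as a bijection between the complete simple games with $t=2$ and $r+1$ shift-minimal winning coalitions and the non-negative integer solutions $(x_0,\dots,x_{r-1},y_0,\dots,y_{r+1},z_1,z_2)$ of~(\ref{eq_two_types_param}). By Theorem~\ref{thm_characterization_cs} together with the compact description~(\ref{compact_ilp_2_ge_2}) (with the rows relabelled $0,\dots,r$), such a game is the same datum as a vector $\widetilde{n}=\begin{pmatrix}n_1&n_2\end{pmatrix}$ and rows $\widetilde{m}_0,\dots,\widetilde{m}_r$ with $n_1+n_2=n$, $n_1\ge 1$, $0\le m_{i,j}\le n_j$, $m_{i,1}\ge m_{i+1,1}+1$ and $m_{i,1}+m_{i,2}+1\le m_{i+1,1}+m_{i+1,2}$ for $0\le i\le r-1$. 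Setting $s_i:=m_{i,1}+m_{i,2}$, the last two conditions say that the first entries strictly decrease and the sums $s_i$ strictly increase. Property~(iii) of Theorem~\ref{thm_characterization_cs} is then automatic: $m_{0,1}\ge r\ge 1$, at most one row can have second entry $n_2$, and a short case check (invoking $m_{0,1}+m_{0,2}+1\le m_{1,1}+m_{1,2}$ when $r=1$) yields a row $i$ with $m_{i,1}>0$ and $m_{i,2}<n_2$.

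First I would define the inverse map: given such a game, set $x_i:=m_{i,1}-m_{i+1,1}-1$ for $0\le i\le r-1$, $y_0:=m_{0,2}$, $y_i:=s_i-s_{i-1}-1$ for $1\le i\le r$, $y_{r+1}:=m_{r,1}$, $z_1:=n_1-m_{0,1}$, $z_2:=n_2-m_{r,2}$. The strict inequalities above make $x_i$ and $y_1,\dots,y_r$ non-negative integers, $y_0,y_{r+1}\ge 0$ is immediate, and $z_1,z_2\ge 0$ follows from $m_{0,1}\le n_1$ and $m_{r,2}\le n_2$.

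Then I would substitute these values into the displayed vector and matrix and verify by telescoping that the original game is recovered. The identities needed are $\sum_{j=i}^{r-1}x_j=m_{i,1}-m_{r,1}-(r-i)$, $\sum_{j=0}^{i-1}x_j=m_{0,1}-m_{i,1}-i$ and $\sum_{j=0}^{i}y_j=s_i-m_{0,1}-i$, which give $r-i+\sum_{j=i}^{r-1}x_j+y_{r+1}=m_{i,1}$ and $2i+\sum_{j=0}^{i-1}x_j+\sum_{j=0}^{i}y_j=s_i-m_{i,1}=m_{i,2}$, while $n_1=m_{0,1}+z_1$ and $n_2=m_{r,2}+z_2$ recover $\widetilde{n}$. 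Adding $m_{0,1}=r+\sum_{j=0}^{r-1}x_j+y_{r+1}$ to $m_{r,2}=2r+\sum_{j=0}^{r-1}x_j+\sum_{j=0}^{r}y_j$ and using $n=n_1+n_2=m_{0,1}+m_{r,2}+z_1+z_2$ gives exactly~(\ref{eq_two_types_param}).

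For the converse I would check that, starting from arbitrary non-negative integers satisfying~(\ref{eq_two_types_param}), the displayed formulas produce a valid game: consecutive first entries differ by $1+x_i\ge 1$, consecutive sums $s_i$ differ by $1+y_{i+1}\ge 1$, all entries are manifestly non-negative, $m_{i,1}\le m_{0,1}=n_1-z_1\le n_1$, $m_{i,2}\le m_{r,2}=n_2-z_2\le n_2$ (since $s_i\le s_r$ and $m_{r,1}\le m_{i,1}$), and $n_1\ge m_{0,1}\ge r\ge 1$, $n_2\ge m_{r,2}\ge 2r\ge 1$. Hence the two maps are mutually inverse, which proves the lemma. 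I expect the only real effort to lie in keeping the index ranges in the telescoping sums straight and in checking the box constraints $0\le m_{i,j}\le n_j$ in both directions; there is no conceptual obstacle.
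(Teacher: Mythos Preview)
Your proposal is correct and follows essentially the same approach as the paper: define the inverse map by $x_i=m_{i,1}-m_{i+1,1}-1$, $y_0=m_{0,2}$, $y_{r+1}=m_{r,1}$, $y_i=m_{i,2}-m_{i-1,2}-2-x_{i-1}$ (which equals your $s_i-s_{i-1}-1$), and $z_1,z_2$ as the slacks in $n_1,n_2$, then verify non-negativity from the inequalities in~(\ref{compact_ilp_2_ge_2}) and check that the forward map lands in valid games. The paper's proof is considerably terser and omits the explicit telescoping identities and the verification of the box constraints that you spell out; your write-up is a fuller version of the same argument. (Minor remark: your separate case check for property~(iii) is not actually needed, since $m_{i,2}$ increases by at least $2$ at each step, so row~$0$ always satisfies $m_{0,1}\ge r\ge 1$ and $m_{0,2}<m_{1,2}\le n_2$.)
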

\begin{proof}
For one direction, we only have to check the conditions of Theorem~\ref{thm_characterization_cs} or directly check the
system of linear inequalities~(\ref{compact_ilp_2_ge_2}).

For the other direction, we state that one can recursively determine the $x_h$, $y_i$, and $z_j$ via
\begin{eqnarray*}
  y_0 &=& m_{0,2},\\
  y_{r+1}&=& m_{r,1},\\
  x_i &=& m_{i,1}-m_{i+1,1}-1\quad\quad\,\quad\quad\text{ for }i=r-1,\dots,0,\\
  y_i &=&m_{i,2}-m_{i-1,2}-2-x_{i-1}\quad\text{ for }i=1,\dots,r,\\
  z_1 &=&n_1-r-\sum\limits_{j=0}^{r-1}x_j-y_{r+1},\text{ and}\\
  z_2 &=&n_2-2r-\sum\limits_{j=0}^{r-1}x_j-\sum\limits_{j=0}^{r}y_j.
\end{eqnarray*}
Verifying $x_h,y_i,z_j\ge 0$ finishes the proof.
\end{proof}

Counting the number of solutions of Equation~(\ref{eq_two_types_param}) and determining the maximum possible $r$ gives:
\begin{lemma}
  For $r\ge 2$ we have
  \begin{equation*}
    cs(n,2,r)=\sum\limits_{i=0}^{\left\lfloor\frac{n-3r+3}{2}\right\rfloor} {i+r-2\choose r-2}{n-2r-2i+5\choose r+2}.
  \end{equation*}
\end{lemma}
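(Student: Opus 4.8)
The plan is to read $cs(n,2,r)$ directly off the bijection established in the previous lemma, which identifies complete simple games with $t=2$ and a prescribed number of shift-minimal winning coalitions with the non-negative integer solutions of a single linear Diophantine equation; counting those solutions is then a routine stars-and-bars computation. Concretely, I would first specialise the previous lemma to $r$ shift-minimal winning coalitions, i.e.\ replace its parameter ``$r$'' by ``$r-1$'' throughout. This says that, for $r\ge 2$, a complete simple game with $t=2$ and $r$ shift-minimal winning coalitions is encoded bijectively --- and, via part~(b) of Theorem~\ref{thm_characterization_cs}, isomorphism-free --- by non-negative integers $x_0,\dots,x_{r-2}$, $y_0,\dots,y_r$, $z_1,z_2$ with
\begin{equation*}
  2\sum_{i=0}^{r-2}x_i + \sum_{i=0}^{r}y_i + z_1 + z_2 = n-3r+3.
\end{equation*}
Here $r-1$ of the unknowns carry coefficient $2$ while $r+3$ of them --- the $r+1$ variables $y_i$ together with $z_1$ and $z_2$ --- carry coefficient $1$, and the hypothesis $r\ge 2$ guarantees $r-2\ge 0$. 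Hence $cs(n,2,r)$ equals the number of solutions of this equation.

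To count these solutions I would stratify by the value $i:=\sum_{j=0}^{r-2}x_j\in\mathbb{Z}_{\ge 0}$. For fixed $i$ there are ${i+r-2 \choose r-2}$ ways to write $i$ as an ordered sum of the $r-1$ non-negative integers $x_0,\dots,x_{r-2}$, and, independently, the residual equation $\sum_{j=0}^{r}y_j+z_1+z_2=n-3r+3-2i$ --- a linear equation in $r+3$ non-negative unknowns --- has ${n-2r-2i+5 \choose r+2}$ solutions when $n-3r+3-2i\ge 0$ and none otherwise. Multiplying the two counts and summing over $i$, and noting that $n-3r+3-2i$ is negative precisely for $i>\left\lfloor\frac{n-3r+3}{2}\right\rfloor$, so that only the smaller indices contribute a term, yields
\begin{equation*}
  cs(n,2,r)=\sum_{i=0}^{\left\lfloor\frac{n-3r+3}{2}\right\rfloor}{i+r-2 \choose r-2}{n-2r-2i+5 \choose r+2},
\end{equation*}
which is the asserted formula.

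I do not expect any genuine obstacle here: the combinatorial content is just the standard identity $\#\{(a_1,\dots,a_k)\in\mathbb{Z}_{\ge 0}^{k}:\sum_j a_j=M\}={M+k-1 \choose k-1}$ for $M\ge 0$. The only places demanding care are the bookkeeping around the index shift from $r+1$ to $r$ shift-minimal winning coalitions and the resulting weights (that there are $r-1$ variables of weight $2$ and $r+3$ --- not $r+1$ --- of weight $1$), and justifying the truncation of the summation by feasibility of the residual equation rather than by a formal appeal to vanishing binomial coefficients.
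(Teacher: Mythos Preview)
Your argument is correct and is exactly the computation the paper has in mind: it derives the formula by counting the non-negative integer solutions of Equation~(\ref{eq_two_types_param}) after the index shift $r\mapsto r-1$, stratifying by $i=\sum x_j$ and applying stars-and-bars twice. The paper states only ``counting the number of solutions of Equation~(\ref{eq_two_types_param})'' without spelling out the details, so your write-up is a faithful expansion of the intended proof.
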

For $r=1$ we have
\begin{equation}
  cs(n,2,1)=\frac{n^3-n}{6}.
\end{equation}
The corresponding ordinary generating functions are given by $\frac{x^2}{(1-x)^4}$ for $r=1$ and by $\frac{x^{3r-3}}{(1-x)^{r+3}(1-x^2)^{r-1}}$ for $r\ge 2$.

\begin{theorem}
  For all $n\ge 1$ the number of complete simple games with $t=2$ equivalence classes of voters is given by
  $$cs(n,2)=Fibonacci(n+6)\,-\,\left(n^2+4n+8\right),$$
  where $Fibonacci(0)=0$, $Fibonacci(1)$, and $Fibonacci(i)=Fibonacci(i-1)+Fibonacci(i-2)$ for $i\ge 2$.
\end{theorem}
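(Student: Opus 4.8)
The plan is to pass to ordinary generating functions and exploit the fact that $cs(n,2)=\sum_{r\ge 1}cs(n,2,r)$, where for each fixed $n$ only finitely many summands are non-zero. Hence the OGF $C(x):=\sum_{n\ge 0}cs(n,2)\,x^{n}$ is the sum of the OGFs of the functions $n\mapsto cs(n,2,r)$ recorded above, namely $\dfrac{x^{2}}{(1-x)^{4}}$ for $r=1$ and $\dfrac{x^{3r-3}}{(1-x)^{r+3}(1-x^{2})^{r-1}}$ for $r\ge 2$. Interchanging the two summations is legitimate because each coefficient of $C(x)$ is a finite sum.

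The main computation will be to evaluate $\sum_{r\ge 2}\dfrac{x^{3r-3}}{(1-x)^{r+3}(1-x^{2})^{r-1}}$. Writing $r=k+2$ turns this into the geometric series
$$
  \frac{x^{3}}{(1-x)^{5}(1-x^{2})}\sum_{k\ge 0}\left(\frac{x^{3}}{(1-x)(1-x^{2})}\right)^{k},
$$
which is a well-defined identity of formal power series since the common ratio has order $3$. Summing it yields $\dfrac{x^{3}}{(1-x)^{4}\big((1-x)(1-x^{2})-x^{3}\big)}$, and the pleasant point is that
$$
  (1-x)(1-x^{2})-x^{3}=(1-x)^{2}(1+x)-x^{3}=1-x-x^{2},
$$
the Fibonacci denominator. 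Adding the $r=1$ term and clearing denominators gives
$$
  C(x)=\frac{x^{2}}{(1-x)^{4}}+\frac{x^{3}}{(1-x)^{4}(1-x-x^{2})}
      =\frac{x^{2}(1-x-x^{2})+x^{3}}{(1-x)^{4}(1-x-x^{2})}
      =\frac{x^{2}(1+x)}{(1-x)^{3}(1-x-x^{2})}.
$$

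It then remains to check that the right-hand side of the theorem has the same OGF. From the recurrence one gets the standard shift identity $\sum_{n\ge 0}Fibonacci(n+k)\,x^{n}=\dfrac{Fibonacci(k)+Fibonacci(k-1)\,x}{1-x-x^{2}}$, so with $k=6$ the generating function of $n\mapsto Fibonacci(n+6)$ is $\dfrac{8+5x}{1-x-x^{2}}$; and from $\sum n^{2}x^{n}=\dfrac{x+x^{2}}{(1-x)^{3}}$, $\sum nx^{n}=\dfrac{x}{(1-x)^{2}}$, $\sum x^{n}=\dfrac{1}{1-x}$ one obtains $\sum_{n\ge 0}(n^{2}+4n+8)\,x^{n}=\dfrac{8-11x+5x^{2}}{(1-x)^{3}}$. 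Subtracting and putting everything over $(1-x)^{3}(1-x-x^{2})$, the numerator is $(8+5x)(1-x)^{3}-(8-11x+5x^{2})(1-x-x^{2})$, which a one-line expansion shows equals $x^{2}+x^{3}$. Hence this difference equals $C(x)$, so the coefficients agree for every $n\ge 0$, in particular for $n\ge 1$. (As a sanity check, $C(x)$ has neither a constant nor a linear term, matching $Fibonacci(6)-8=0$ and $Fibonacci(7)-13=0$.)

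There is no serious obstacle here: the argument is entirely generating-function bookkeeping and is self-checking. The only step needing a small flash of recognition is that summing the geometric series in $r$ reproduces exactly the factor $1-x-x^{2}$; once that is noticed, the identification with the Fibonacci numbers is forced. An alternative would be to argue by induction directly from the Fibonacci recurrence using a closed form for $\sum_{r}cs(n,2,r)$, but that route is longer and less transparent than the computation above.
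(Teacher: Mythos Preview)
Your proof is correct and follows essentially the same route as the paper: sum the generating functions $\frac{x^{2}}{(1-x)^{4}}$ and $\frac{x^{3r-3}}{(1-x)^{r+3}(1-x^{2})^{r-1}}$ over $r$, recognize the geometric series, and arrive at $C(x)=\dfrac{x^{2}(1+x)}{(1-x)^{3}(1-x-x^{2})}$. The only cosmetic difference is in the last step: the paper performs a partial-fraction split of $C(x)$ and then massages $21\,Fibonacci(n-1)+13\,Fibonacci(n-2)$ into $Fibonacci(n+6)$ via the recurrence, whereas you compute the OGF of $Fibonacci(n+6)-(n^{2}+4n+8)$ directly and match it to $C(x)$; both are equally valid and equally short.
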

\begin{proof}
  Summing up the generating functions for $cs(n,2,r)$ yields
  \begin{eqnarray*}
  &&\frac{x^2}{(1-x)^4}+\sum_{r=2}^{\infty} \frac{x^{3r-3}}{(1-x)^{r+3}(1-x^2)^{r-1}}\\
  &=&\frac{x^2}{(1-x)^4}+\frac{x^3}{(1-x)^5(1-x^2)}\sum_{r=0}^{\infty}\left(\frac{x^3}{(1-x)(1-x^2)}\right)^r\\
  &=&\frac{x^2(1-x)(1-x^2)+x^3\cdot\frac{1}{1-\frac{x^3}{(1-x)(1-x^2)}}}{(1-x)^5(1-x^2)}\\
  &=&\frac{x^2(1+x)}{(1-x)^3(1-x-x^2)}\\
  &=&\frac{21x^2+13x^3}{1-x-x^2}-\frac{20x^2-31x^3+13x^4}{(1-x)^3}.
\end{eqnarray*}
Since $\frac{x}{1-x-x^2}$ is the generating function for the Fibonacci numbers $Fibonacci(n)$ and $\frac{x}{(1-x)^3}$ is the generating function for the sequence $\frac{n(n+1)}{2}$ we conclude $cs(n,2)$
\begin{eqnarray*}
  &=&21\cdot Fibonacci(n-1)+13\cdot Fibonacci(n-2)\\
  &&-\frac{20n(n-1)-31(n-1)(n-2)+13(n-2)(n-3)}{2}\\
  &=& Fibonacci(n+6)-\left(n^2+4n+8\right)
\end{eqnarray*}
iteratively using the equation $Fibonacci(i)=Fibonacci(i-1)+Fibonacci(i-2)$ for $i\ge 2$.
\end{proof}

\section{Formulas for the number of complete simple games}
\label{sec_main}

In this section we will give a general algorithm in order to determine formulas for the number $cs(n,t,r)$ of complete simple games with respect to the number $t$ of equivalence classes of voters and the number $r$ of shift-minimal winning coalitions. The main tool  is the parametric Barvinok algorithm, see Subsection~\ref{subsec_counting_integer_points} or \cite{enumerator}. It is indeed possible to express the conditions from Theorem~\ref{thm_characterization_cs} using linear constraints, see Appendix \ref{sec_ilp_formulation}. Unfortunately, at least the present authors, can manage that only by using so-called Big-M-constraints, see e.~g.{} \cite{Koch2004b}. The consequence is that such a formulation works for a fixed number $n$ of voters only.

In order to be able to compute exact formulas depending on $n$ we exhaustively generate a finite list of sub-cases, which each can be modeled via a system of linear inequalities without using Big-M-constraints. Let us start with the conditions (a)(ii) and (a)(iv) of Theorem~\ref{thm_characterization_cs}. Since we have to ensure $\widetilde{m}_i\bowtie \widetilde{m}_j$ for all $i<j$, for each $i<j$ there must be integers $1\le a_{i,j}<b_{i,j}\le t$ such that
\begin{eqnarray}
  \sum_{h=1}^{k} m_{i,h}&=&\sum_{h=1}^{k} m_{j,h}\quad\forall 1\le k<a_{i,j},\\
  \sum_{h=1}^{a_{i,j}}m_{i,h}&>&\sum_{h=1}^{a_{i,j}} m_{j,h},\label{ie_bowtie_larger}\\
  \sum_{h=1}^{k} m_{i,h}&\ge&\sum_{h=1}^{k} m_{j,h}\quad\forall a_{i,j}<k<b_{i,j},\text{ and }\\
  \sum_{h=1}^{b_{i,j}}m_{i,h}&<&\sum_{h=1}^{b_{i,j}} m_{j,h}.
\end{eqnarray}
So, $a_{i,j}$ is the first index, where the partial sums of $\widetilde{m}_i$ and $\widetilde{m}_j$ differ. Due to condition
(a)(iv), they differ as in Inequality~(\ref{ie_bowtie_larger}). The next index with the partial sum of $\widetilde{m}_i$ smaller than the partial sum of $\widetilde{m}_j$ is denoted by $b_{i,j}$. In total there are ${t \choose 2}^{r\choose 2}$ possibilities for the $a_{i,j}$'s and the $b_{i,j}$'s. From (a)(iv), we conclude:
\begin{lemma}
  For $1\le i<j_1<j_2\le r$ we have $a_{i,j_1}\ge a_{i,j_2}$ and for
  $1\le i_1<i_2<j\le r$ we have $a_{i_1,j}\ge a_{i_2,j}$.
\end{lemma}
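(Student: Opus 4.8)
The statement to prove is: for $1\le i<j_1<j_2\le r$ we have $a_{i,j_1}\ge a_{i,j_2}$, and for $1\le i_1<i_2<j\le r$ we have $a_{i_1,j}\ge a_{i_2,j}$. Recall that $a_{i,j}$ is defined as the first index $k$ where the partial sums $\sum_{h=1}^k m_{i,h}$ and $\sum_{h=1}^k m_{j,h}$ differ; by condition (a)(iv) of Theorem~\ref{thm_characterization_cs}, since $i<j$ implies $\widetilde m_i\gtrdot\widetilde m_j$ (lexicographically), the partial sums agree up to index $a_{i,j}-1$ and at index $a_{i,j}$ the sum for $\widetilde m_i$ is strictly larger. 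So the plan is to argue purely from the lexicographic ordering of the rows and from the definition of $a_{i,j}$ as a ``first point of disagreement'' between two rows, treated as sequences of partial sums.

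The main idea is the following. Introduce, for each row $\ell$, the partial-sum sequence $S_\ell(k):=\sum_{h=1}^k m_{\ell,h}$. For two rows $p<q$ let $\alpha(p,q)$ be the least $k$ with $S_p(k)\neq S_q(k)$; this is exactly $a_{p,q}$, and by (a)(iv) we have $S_p(\alpha(p,q))>S_q(\alpha(p,q))$ while $S_p(k)=S_q(k)$ for all $k<\alpha(p,q)$. Now fix $i<j_1<j_2$. I would compare $\widetilde m_{j_1}$ and $\widetilde m_{j_2}$: their partial sums agree up to index $a_{j_1,j_2}-1$. Set $c:=a_{j_1,j_2}$. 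If $a_{i,j_2}\le a_{i,j_1}$ there is nothing to prove; otherwise assume for contradiction $a_{i,j_2}>a_{i,j_1}=:a$. Then $S_i(k)=S_{j_1}(k)$ for $k<a$ and $S_i(a)>S_{j_1}(a)$, whereas $S_i(k)=S_{j_2}(k)$ for all $k\le a$ (since $a<a_{i,j_2}$). Combining, $S_{j_2}(a)=S_i(a)>S_{j_1}(a)$, so $S_{j_1}$ and $S_{j_2}$ already differ at index $a$, hence $c=a_{j_1,j_2}\le a$; moreover at that first disagreement the larger one is $S_{j_2}$. But (a)(iv) applied to the pair $j_1<j_2$ forces $S_{j_1}(a_{j_1,j_2})>S_{j_2}(a_{j_1,j_2})$ — the opposite inequality — a contradiction. (If one is careful one must also handle the possibility $c<a$: then $S_i(c)=S_{j_2}(c)\ne S_{j_1}(c)$ would mean $a_{i,j_1}\le c<a$, contradicting $a_{i,j_1}=a$; so in fact $c=a$ and the sign contradiction above applies.) The second claim, $a_{i_1,j}\ge a_{i_2,j}$ for $i_1<i_2<j$, is entirely symmetric: set $a:=a_{i_2,j}$, assume $a_{i_1,j}<a$ for contradiction, use that $\widetilde m_{i_1}$ and $\widetilde m_{i_2}$ agree up to $a_{i_1,i_2}-1$ together with (a)(iv) for the pair $i_1<i_2$, and derive a sign contradiction at the first index where the relevant partial sums disagree.

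I expect the argument to be short once the bookkeeping of ``which row has the larger partial sum at the first disagreement'' is set up carefully; the only real subtlety — and the step I would be most careful about — is keeping straight the interaction of the three first-disagreement indices $a_{i,j_1}$, $a_{i,j_2}$, $a_{j_1,j_2}$ (resp.\ $a_{i_1,j}$, $a_{i_2,j}$, $a_{i_1,i_2}$) and making sure that the transitive chain of equalities of partial sums is long enough to reach the index at which (a)(iv) delivers the contradictory strict inequality. This is essentially the observation that ``lexicographic order on the partial-sum vectors'' is a genuine total order and that a common prefix of two sequences forces their first disagreement to lie beyond that prefix; no nontrivial computation is involved.
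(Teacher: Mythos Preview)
Your argument for the first inequality $a_{i,j_1}\ge a_{i,j_2}$ is correct and is exactly the kind of short contradiction the paper has in mind: assuming $a:=a_{i,j_1}<a_{i,j_2}$, the rows $\widetilde m_{j_1}$ and $\widetilde m_{j_2}$ first differ at index $a$ with $S_{j_2}(a)=S_i(a)>S_{j_1}(a)$, contradicting (a)(iv) for the pair $j_1<j_2$.

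The second half, however, is \emph{not} ``entirely symmetric'', and your sketch breaks down there. Run the argument: assume $a:=a_{i_1,j}<a_{i_2,j}$. Then for $k<a$ all three partial-sum sequences agree, and at $k=a$ you get $S_{i_1}(a)>S_j(a)=S_{i_2}(a)$, so the first disagreement between $\widetilde m_{i_1}$ and $\widetilde m_{i_2}$ is at $a$ with $S_{i_1}(a)>S_{i_2}(a)$. But for the pair $i_1<i_2$ condition (a)(iv) says precisely that $S_{i_1}$ is the larger one at the first disagreement --- so there is no contradiction. The asymmetry is real: in the first claim the ``third'' row in play is the largest of the three (namely $\widetilde m_i$), while in the second it is the smallest ($\widetilde m_j$), and this flips the sign you obtain relative to what (a)(iv) demands.

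In fact the inequality $a_{i_1,j}\ge a_{i_2,j}$ is false as stated. For $t=r=3$ take $\widetilde m_1=(2,0,0)$, $\widetilde m_2=(1,2,0)$, $\widetilde m_3=(1,0,3)$ with $\widetilde n=(2,2,3)$; all four conditions of Theorem~\ref{thm_characterization_cs} are satisfied, yet $a_{1,3}=1<2=a_{2,3}$. What the symmetric argument \emph{does} prove is the reverse inequality $a_{i_1,j}\le a_{i_2,j}$ (equivalently: $a_{i,j}\le\min\{a_{i,k},a_{k,j}\}$ for $i<k<j$, since agreement on the first $a-1$ coordinates is an equivalence relation whose classes are intervals under the lex order). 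So the second inequality in the lemma appears to be a typo for $\le$; in any case you should not wave it through ``by symmetry''.
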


In order to ensure condition (a)(iii), assuming $t>1$, we introduce two integers $1\le c_j\le r$ and $0\le d_j<3^{c_j-1}$ for all $1\le j<t$. The interpretation of these integer should be the following: For a given column index $1\le j<t$, let $c_j$ be the first row index with
\begin{equation}
  m_{c_j,j}>0\quad\text{and}\quad m_{c_j,j+1}<n_{j+1}.
\end{equation}
Now, we consider the $d_j$ as numbers in base $3$, i.~e.{} we define $\hat{d}_{i,j}\in\{0,1,2\}$ for $1\le i<c_j$ via
\begin{equation}
  d_j=\sum\limits_{i=1}^{c_j-1} \hat{d}_{i,j}\cdot 3^{i-1}.
\end{equation}
To ensure that $c_j$ is the first row index, we require
\begin{eqnarray}
  m_{i,j}&=&0\,\,\,\quad\quad\text{if }\hat{d}_{i,j}\in\{0,2\},\\
  m_{i,j}&>&0\,\,\,\quad\quad\text{if }\hat{d}_{i,j}=1,\\
  m_{i,j+1}&=&n_{j+1}\quad\text{if }\hat{d}_{i,j}\in\{1,2\},\text{ and}\\
  m_{i,j+1}&<&n_{j+1}\quad\text{if }\hat{d}_{i,j}=0
\end{eqnarray}
for all $i<c_j$.

Thus, we obtain a finite list of sub-cases, which are described via the tuples
$$
  \Big(\left(a_{i,j}\right)_{1\le i<j\le r},\,\left(b_{i,j}\right)_{1\le i<j\le r},\,
  \left(c_j\right)_{1\le j<t},\,\left(d_j\right)_{1\le j<t}\Big).
$$
In each sub-case remains a system of linear inequalities having $(r+1)\times t$ variables
(without any reductions), where the number of voters $n$ occurs only on the right hand side,
whose number of integer solutions in principal can be determined using the parametric Barvinok algorithm.

Since the stated necessary conditions for the tuples of the sub-cases result in a huge number of
possibilities, even for small $t$ and $r$, we have to generate these tuples in a search tree where
we check whether the intermediate linear programs corresponding to the nodes of the search tree have a
solution or the requirements are contradicting, using \texttt{ILOG CPLEX  11.2}. 
Afterwards, we run the parametric Barvinok algorithm for each of the remaining possibilities,
see Table \ref{table_remaining_cases}.

\begin{table}[htp]
\begin{center}
\begin{tabular}{rrrrrrrr}
  \hline
  $\mathbf{(t,r)}$ & (3,2) & (3,3) & (3,4) & (3,5) & (4,2) & (4,3) & (5,2) \\
  \textbf{\#} & 9 & 46 & 254 & 1680 & 49 & 1071 & 217\\
  \hline
\end{tabular}

\medskip

\begin{tabular}{rrrrrrr}
  \hline
  $\mathbf{(t,r)}$ & (4,4) & (5,3) & (6,2) & (6,3) & (7,2) & (8,2) \\
  \textbf{\#} & 23666 & 17456 & 865 & 231081 & 3241 & 11665 \\
  \hline
\end{tabular}
\caption{Number of remaining cases for given $\mathbf{t}$, $\mathbf{r}$.}
\label{table_remaining_cases}
\end{center}
\end{table}

\vspace*{-5mm}

Using the described approach, we were able to determine exact formulas for $cs(n,t,r)$ in the cases where $r\le 4 $ for $t=3$,
$r\le 3$ for $t=4$, and $r=2$ for $t=5$, see Appendix \ref{sec_formulae}.

\section{Conclusion and open problems}
\label{sec_conclusion}

We have utilized a known parameterization of complete simple games in order to enumerate their number using the parametric Barvinok algorithm as a subroutine, which (so far) is indeed an obstructing bottleneck. Additionally, we have given a short proof for a surprising formula on the number of complete simple games with two types of voters. 

It would be interesting to discover an exact formula for $cs(n,3)$. Going over to weighted majority games, we can ask the same questions. In contrast to complete simple games, the number $wm(n,t)$ of weighted majority games with an arbitrary but fixed number $t$ types of voters is bounded by a polynomial in $n$, e.~g.{} one can easily show $wm(n,t)<(tn)^{t^4}$. Maybe also an exact enumeration formula for $wm(n,2)$ can be determined.



\bibliographystyle{abbrv}
\bibliography{dedekind}  

\begin{thebibliography}{10}

\bibitem{1017.05008}
A.~Barvinok and K.~Woods.
\newblock Short rational generating functions for lattice point problems.
\newblock {\em J. Amer. Math. Soc.}, 16(4):957--979, 2003.

\bibitem{0821.90085}
A.~I. Barvinok.
\newblock A polynomial time algorithm for counting integral points in polyhedra
  when the dimension is fixed.
\newblock {\em Math. Oper. Res.}, 19(4):769--779, 1994.

\bibitem{1114.52013}
M.~Beck and S.~Robins.
\newblock {\em Computing the continuous discretely. Integer-point enumeration
  in polyhedra}.
\newblock Undergraduate Texts in Mathematics. New York, NY: Springer. xviii,
  226~p., 2007.

\bibitem{complete_simple_games}
F.~Carreras and J.~Freixas.
\newblock Complete simple games.
\newblock {\em Math. Soc. Sci.}, 32:139--155, 1996.

\bibitem{1093.52006}
J.~A. De~Loera.
\newblock The many aspects of counting lattice points in polytopes.
\newblock {\em Math. Semesterber.}, 52(2):175--195, 2005.

\bibitem{1137.52303}
J.~A. De~Loera, R.~Hemmecke, J.~Tauzer, and R.~Yoshida.
\newblock Effective lattice point counting in rational convex polytopes.
\newblock {\em J. Symbolic Comput.}, 38(4):1273--1302, 2005.

\bibitem{dimension}
V.~G. De\u{\i}neko and G.~J. Woeginger.
\newblock On the dimension of simple monotonic games.
\newblock {\em European J. Oper. Res.}, 170(1):315--318, 2006.

\bibitem{0337.10019}
E.~Ehrhart.
\newblock {\em Polyn\^omes arithm\'etiques et m\'ethode des polyedres en
  combinatoire}.
\newblock International Series of Numerical Mathematics. Vol. 35. Basel -
  Stuttgart: Birkh\"auser Verlag, 1977.

\bibitem{0868.05001}
K.~Engel.
\newblock {\em Sperner theory}.
\newblock Encyclopedia of Mathematics and Its Applications. 65. Cambridge:
  Cambridge University Press. ix, 417 p., 1997.

\bibitem{fishburn_brams}
P.~C. Fishburn and S.~J. Brams.
\newblock Minimal winning coalitions in weighted-majority voting games.
\newblock {\em Soc. Choice Welf.}, 13:397--417, 1996.

\bibitem{0892.90188}
J.~Freixas.
\newblock Different ways to represent weighted majority games.
\newblock {\em Top}, 5(2):201--211, 1997.

\bibitem{owen}
J.~Freixas.
\newblock Bounds for {O}wen's multilinear extension.
\newblock {\em J. Appl. Prob.}, 44:852--864, 2007.

\bibitem{integer_representation}
J.~Freixas and X.~Molinero.
\newblock On the existence of minimum integer representation for weighted
  voting systems.
\newblock {\em Ann. Oper. Res.}, 166:243--260, 2009.

\bibitem{sub_freixas}
J.~Freixas and X.~Molinero.
\newblock Complete voting systems with two types of voter: weightedness and
  counting.
\newblock {\em Ann. Oper. Res.}, page 16~p., submitted.

\bibitem{complexity}
J.~Freixas, X.~Molinero, M.~Olsen, and M.~Serna.
\newblock The complexity of testing properties of simple games.
\newblock preprint, 2008.

\bibitem{arxix_freixas}
J.~Freixas, X.~Molinero, and S.~Roura.
\newblock A fibonacci sequence for linear structures with two types of
  components.
\newblock Available at http://arxiv.org/abs/arxiv:0907.3853, 2009.

\bibitem{1151.91021}
J.~Freixas and M.~A. Puente.
\newblock Dimension of complete simple games with minimum.
\newblock {\em European J. Oper. Res.}, 188(2):555--568, 2008.

\bibitem{pre05632618}
J.~Freixas and W.~S. Zwicker.
\newblock Anonymous yes-no voting with abstention and multiple levels of
  approval.
\newblock {\em Games Econ. Behav.}, 67(2):428--444, 2009.

\bibitem{isbell}
J.~R. Isbell.
\newblock A class of majority games.
\newblock {\em Q. J. Math.}, 7:183--187, 1956.

\bibitem{enum1}
G.~Kilibarda and V.~Jovovi\'c.
\newblock On the number of monotone {B}oolean functions with fixed number of
  lower units.
\newblock {\em Intellektualnye sistemy}, 7:193---217, 2003.
\newblock in Russian.

\bibitem{antichains_multisets}
G.~Kilibarda and V.~Jovovi\'c.
\newblock Antichains of {M}ultisets.
\newblock {\em Journal of Integer Sequences}, 7:Article 04.1.5, 2007.

\bibitem{0632.06020}
A.~Kisielewicz.
\newblock A solution of dedekind's problem on the number of isotone boolean
  functions.
\newblock {\em J. Reine Angew. Math.}, 386:139--144, 1988.

\bibitem{Koch2004b}
T.~Koch.
\newblock {\em Rapid {M}athematical {P}rogramming}.
\newblock PhD thesis, Technische {Universit\"at} Berlin, 2004.

\bibitem{1072.06008}
A.~D. Korshunov.
\newblock Monotone boolean functions.
\newblock {\em Russ. Math. Surv.}, 58(5):929--1001, 2003.

\bibitem{0841.90134}
I.~Krohn and P.~Sudh\"olter.
\newblock Directed and weighted majority games.
\newblock {\em Math. Methods Oper. Res.}, 42(2):189--216, 1995.

\bibitem{0524.90067}
H.~W.~j. Lenstra.
\newblock Integer programming with a fixed number of variables.
\newblock {\em Math. Oper. Res.}, 8:538--548, 1983.

\bibitem{1149.91028}
D.~Lepelley, A.~Louichi, and H.~Smaoui.
\newblock On {E}hrhart polynomials and probability calculations in voting
  theory.
\newblock {\em Soc. Choice Welf.}, 30(3):363--383, 2008.

\bibitem{0243.94014}
S.~Muroga.
\newblock {\em Threshold logic and its applications}.
\newblock New York etc.: Wiley-Interscience, a Division of John Wiley \& Sons,
  Inc. XIV, 478 p., 1971.

\bibitem{0105.12002}
S.~Muroga, I.~Toda, and M.~Kondo.
\newblock Majority decision functions of up to six variables.
\newblock {\em Math. Comput.}, 16:459--472, 1962.

\bibitem{0205.17805}
S.~Muroga, T.~Tsuboi, and C.~R. Baugh.
\newblock Enumeration of threshold functions of eight variables.
\newblock {\em IEEE Trans. Comput.}, 19:818--825, 1970.

\bibitem{cliquer}
S.~Niskanen and P.~\"Osterg{\aa}rd.
\newblock Cliquer user's guide, version 1.0.
\newblock Technical Report T48, Communications Laboratory, Helsinki University
  of Technology, Espoo, Finland, 2003.

\bibitem{1019.05054}
P.~R.~J. \"Osterg{\aa}rd.
\newblock A fast algorithm for the maximum clique problem.
\newblock {\em Discrete Appl. Math.}, 120(1--3):197--207, 2002.

\bibitem{enumerator}
M.~B. S.~Verdoolaege, K.~M.~Woods and R.~Cools.
\newblock Computation and manipulation of enumerators of integer projections of
  parametric polytopes.
\newblock Technical Report CW 392, Katholieke Universiteit Leuven - Department
  of Computer Science, 2005.

\bibitem{0943.91005}
A.~D. Taylor and W.~S. Zwicker.
\newblock {\em Simple games. Desirability relations, trading,
  pseudoweightings}.
\newblock Princeton, NJ: Princeton University Press. 246 p., 1999.

\bibitem{1141.91379}
M.~C. Wilson and G.~Pritchard.
\newblock Probability calculations under the {I}{A}{C} hypothesis.
\newblock {\em Math. Soc. Sci.}, 54(3):244--256, 2007.

\bibitem{phd_winder}
R.~O. Winder.
\newblock {\em Threshold Logic}.
\newblock PhD thesis, Princeton University, 1962.

\bibitem{0233.94016}
R.~O. Winder.
\newblock Enumeration of seven-argument threshold functions.
\newblock {\em IEEE Trans. Electron. Comput.}, 14:315--325, 1965.

\bibitem{0207.02101}
R.~O. Winder.
\newblock The fundamentals of threshold logic.
\newblock In J.~Tou, editor, {\em Applied Automata Theory}, pages 236--318.
  Academic Press, New York, 1968.

\bibitem{0797.05004}
Y.~A. Zuev.
\newblock Methods of geometry and probabilistic combinatorics in threshold
  logic.
\newblock {\em Discrete Math. Appl.}, 2(4):427--438, 1991.

\end{thebibliography}
%
%


\appendix
\section{Complete simple games as integer points in polytopes}
\label{sec_ilp_formulation}
In the following, we will model the requirements of Theorem~\ref{thm_characterization_cs} using linear constraints and additional binary auxiliary variables. In order to obtain a bijection between complete simple games and lattice points, we have to ensure that for each complete simple games there exists exactly one allocation of the original and the auxiliary variables.
\begin{theorem}
  \label{thm_ilp_formulation}
  For given parameters $n$, $t$, and $r$ with $t+r>2$ each complete simple game attaining these parameters bijectively corresponds
  to a lattice point of a polytope $P$ defined by the following inequalities:
  \begin{eqnarray}
    n-\sum_{j=1}^t n_j &=&   0\label{ilp01}\\
    n_j-m_{i,j}          &\ge& 0\label{ilp02}\\
    \sum_{h=1}^j\left( m_{p,h} -m_{q,h}\right)-x_{p,q,j}^{(1)}+nx_{p,q,j}^{(2)} & \ge& 0\label{ilp03}\\
    \sum_{h=1}^j\left(m_{p,h} -m_{q,h}\right)-nx_{p,q,j}^{(1)} &\le& 0\label{ilp04}\\
    x_{p,q,j}^{(1)}+x_{p,q,j}^{(2)} &=& 1\label{ilp05}\\
    \sum_{j=1}^t x_{p,q,j}^{(1)} &\ge& 1\label{ilp06}\\
    m_{i,j'}-x_{i,j'}^{(3)} &\ge& 0\label{ilp07}\\
    k x_{i,j'}^{(3)} -m_{i,j'} &\ge& 0\label{ilp08}\\
    m_{i,j'+1}-n_{j'+1}+x_{i,j'}^{(4)} &\le& 0\label{ilp09}\\
    n_{j'+1}-m_{i,j'+1}-kx_{i,j'}^{(4)} &\le &0\label{ilp10}\\
    2x_{i,j'}^{(5)}-x_{i,j'}^{(3)}-x_{i,j'}^{(4)} &\le &0 \label{ilp11}\\
    x_{i,j'}^{(5)}-x_{i,j'}^{(3)}-x_{i,j'}^{(4)} &\ge& -1\label{ilp12}\\
    \sum_{i=1}^r x_{i,j'}^{(5)} &\ge& 1\label{ilp13}\\
    m_{i',j}-m_{i'+1,j}-x_{i',i'+1,j}^{(6)}+kx_{i',i'+1,j}^{(7)} &\ge& 0\label{ilp14}\\
    m_{i',j}-m_{i'+1,j}-k x_{i',i'+1,j}^{(6)}&\le& 0\label{ilp15}\\
    m_{i'+1,j}-m_{i',j}-x_{i'+1,i',j}^{(6)}+kx_{i'+1,i',j}^{(7)} &\ge& 0\label{ilp16}\\
     m_{i'+1,j}-m_{i',j}-k x_{i'+1,i',j}^{(6)}&\le& 0\label{ilp17}\\
    x_{i',i'+1,j}^{(6)}+x_{i',i'+1,j}^{(7)} &=& 1\label{ilp18}\\
    x_{i'+1,i',j}^{(6)}+x_{i'+1,i',j}^{(7)} &=& 1\label{ilp19}\\
    x_{i',j}^{(8)}-x_{i',i'+1,j}^{(6)} &\le& 0\label{ilp20}\\
    t(x_{i',j}^{(8)}-1)+\sum_{h=1}^j x_{i'+1,i',h}^{(6)} &\le& 0\label{ilp21}\\
    x_{i',j}^{(8)}-x_{i',i'+1,j}^{(6)}+\sum_{h=1}^j x_{i'+1,i',h}^{(6)} &\ge& 0\label{ilp22}\\
\end{eqnarray}
\begin{eqnarray}
    \sum_{j=1}^t x_{i',j}^{(8)} &\ge& 1\label{ilp23}\\
    n_j              &\in& \mathbb{N}_{>0}\label{ilp24}\\
    m_{i,j}          &\in& \mathbb{N}_0\label{ilp25}\\
    x_{p,q,j}^{(1)},x_{p,q,j}^{(2)} &\in& \{0,1\},\label{ilp26}\\
    x_{i,j'}^{(3)},x_{i,j'}^{(4)},x_{i,j'}^{(5)} &\in& \{0,1\},\label{ilp27}\\
    x_{i',i'+1,j}^{(6)},x_{i',i'+1,j}^{(7)},x_{i'+1,i',j}^{(6)},x_{i'+1,i',j}^{(7)} &\in& \{0,1\},\label{ilp28}\\
    x_{i',j}^{(8)} &\in &\{0,1\},\label{ilp29}
  \end{eqnarray}
  for all $1\le i\le r$, $1\le i'\le r-1$, $1\le j\le t$, $1\le j'\le t-1$, $1\le p,q\le r,p\neq q$, where
  $k=n-t+1$.
\end{theorem}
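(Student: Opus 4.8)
The plan is to prove two separate statements and then glue them with Theorem~\ref{thm_characterization_cs}. First: the linear inequalities (\ref{ilp01})--(\ref{ilp23}) together with the range and integrality constraints (\ref{ilp24})--(\ref{ilp29}) force the ``honest'' coordinates $(n_j)_{1\le j\le t}$ and $(m_{i,j})_{1\le i\le r,\,1\le j\le t}$ of a lattice point of $P$ to satisfy exactly properties (a)(i)--(a)(iv) of Theorem~\ref{thm_characterization_cs}, and conversely every such pair $(\widetilde n,\mathcal M)$ is the honest part of a lattice point of $P$. Second: every auxiliary variable $x^{(1)},\dots,x^{(8)}$ is forced to equal an explicit indicator of the honest coordinates, so the extension in the converse direction is unique. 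Together these say that the projection forgetting the auxiliary variables is a bijection between lattice points of $P$ and pairs $(\widetilde n,\mathcal M)$ obeying (a)(i)--(a)(iv); composing with the bijection from Theorem~\ref{thm_characterization_cs}(a)--(b) between such pairs and isomorphism classes of complete simple games with parameters $n,t,r$ yields the claim. The workhorse throughout is the trivial bound $0\le m_{i,j}\le n_j\le n-(t-1)=k$ and $0\le\sum_{h\le j}m_{i,h}\le n$ (from (\ref{ilp01}), (\ref{ilp02}), (\ref{ilp24}), (\ref{ilp25})), which is exactly what makes the big-$M$ constants --- $n$ in (\ref{ilp03})--(\ref{ilp04}) and $k$ everywhere else --- large enough that the indicator logic below is forced but never cuts off a feasible honest point.

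Next I would clear the first three blocks, each by a short big-$M$ computation. Property (a)(i) is immediate from (\ref{ilp01}), (\ref{ilp02}), (\ref{ilp24}), (\ref{ilp25}). For (a)(ii): (\ref{ilp03})--(\ref{ilp05}) pin $x^{(1)}_{p,q,j}=1$ iff $\sum_{h\le j}m_{p,h}>\sum_{h\le j}m_{q,h}$ and $x^{(2)}_{p,q,j}=1-x^{(1)}_{p,q,j}$, so (\ref{ilp06}) for the ordered pair $(p,q)$ says precisely $\widetilde m_p\not\preceq\widetilde m_q$; applying it also to $(q,p)$ gives $\widetilde m_p\bowtie\widetilde m_q$. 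For (a)(iii) with $t\ge 2$: (\ref{ilp07})--(\ref{ilp08}) force $x^{(3)}_{i,j'}=[m_{i,j'}>0]$, (\ref{ilp09})--(\ref{ilp10}) force $x^{(4)}_{i,j'}=[m_{i,j'+1}<n_{j'+1}]$, (\ref{ilp11})--(\ref{ilp12}) force $x^{(5)}_{i,j'}=x^{(3)}_{i,j'}x^{(4)}_{i,j'}$, and (\ref{ilp13}) then states exactly that each column $j'<t$ has a row with $m_{i,j'}>0$ and $m_{i,j'+1}<n_{j'+1}$; when $t=1$ the hypothesis $t+r>2$ forces $r\ge 2$, whereupon (\ref{ilp06}) is already infeasible (it would demand $m_{p,1}>m_{q,1}$ for all $p\ne q$), so $P$ and the set of such games are both empty --- consistent with the $t=1$ clause of (a)(iii).

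The block (\ref{ilp14})--(\ref{ilp23}) encoding the lexicographic ordering (a)(iv) is the delicate part. Here (\ref{ilp14})--(\ref{ilp19}) pin $x^{(6)}_{p,q,j}=[m_{p,j}>m_{q,j}]$ and $x^{(7)}_{p,q,j}=1-x^{(6)}_{p,q,j}$ for every consecutive index pair and its reverse, and then (\ref{ilp20})--(\ref{ilp22}) pin $x^{(8)}_{i',j}=1$ exactly when $m_{i',j}>m_{i'+1,j}$ and $m_{i',h}\ge m_{i'+1,h}$ for all $h\le j$; consequently (\ref{ilp23}) asserts that such a column exists for every $1\le i'\le r-1$. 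The lemma I would then establish is that, \emph{given} $\widetilde m_{i'}\bowtie\widetilde m_{i'+1}$ (already guaranteed by the (a)(ii) block), the condition encoded by (\ref{ilp20})--(\ref{ilp23}) is equivalent to $\widetilde m_{i'}\gtrdot\widetilde m_{i'+1}$. Its engine is the observation that for two $\preceq$-incomparable vectors the first coordinate at which their entries differ equals the first index at which their partial sums differ (equality of all partial sums up to $k-1$ forces equality of all entries up to $k-1$), and at that coordinate exactly one of the two directions of $\gtrdot$ holds; since $\gtrdot$ is a total order, requiring $\widetilde m_{i'}\gtrdot\widetilde m_{i'+1}$ for all consecutive $i'$ is the same as requiring a strictly $\gtrdot$-decreasing chain of rows, which is (a)(iv).

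Finally I would assemble the bijection: a lattice point of $P$ restricts to a pair $(\widetilde n,\mathcal M)$ satisfying (a)(i)--(a)(iv); conversely any such pair extends to a lattice point of $P$, uniquely, since every $x^{(\cdot)}$ was shown to equal a determined indicator; hence restriction is a bijection onto the admissible pairs, and Theorem~\ref{thm_characterization_cs} finishes. I expect the main obstacle to be precisely the (a)(iv) block: one must simultaneously verify that $x^{(6)},x^{(7)},x^{(8)}$ are uniquely pinned down (so the correspondence is a bijection rather than a surjection with fibres) and prove the ``first differing coordinate $=$ first differing partial sum'' lemma that converts the coordinatewise statement in (\ref{ilp20})--(\ref{ilp23}) into the lexicographic property (a)(iv); the other blocks are routine big-$M$ bookkeeping.
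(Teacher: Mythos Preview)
Your proposal is correct and follows essentially the same approach as the paper: verify the indicator interpretations of $x^{(1)},\dots,x^{(8)}$ via the big-$M$ bounds $|m_{i,j}|\le k$ and $|\sum_{h\le j}(m_{p,h}-m_{q,h})|\le n$, then read off properties (a)(i)--(a)(iv) of Theorem~\ref{thm_characterization_cs} and the uniqueness of the auxiliary extension. One minor remark: the equivalence between ``$x^{(8)}_{i',j}=1$ for some $j$'' and $\widetilde m_{i'}\gtrdot\widetilde m_{i'+1}$ holds directly (take the least $h\le j$ where the entries differ) without invoking $\bowtie$ or any partial-sum lemma, so the obstacle you flag in the (a)(iv) block is smaller than you anticipate.
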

\begin{proof}
In order to prove the statement, on the one hand, we have to show that for fixed parameters $n$, $t$, $r$ and each complete simple game characterized by integers $n_j$ and $m_{i,j}$ there exists exactly one lattice point with coordinates $n_j$, $m_{i,j}$, $x_\star^{(\star)}$ in $P$. On the other hand, we have to show that for a given lattice point $n_j$, $m_{i,j}$, $x_\star^{(\star)}$ of $P$ the integers $n_j$, $m_{i,j}$ fulfill the conditions of a complete simple game.

At first we remark that $\widetilde{n}\in\mathbb{N}_{>0}$, $\sum\limits_{j=1}^t n_j=n$, and property~(i) of Theorem \ref{thm_characterization_cs} are equivalent to Inequalities (\ref{ilp01}), (\ref{ilp02}), (\ref{ilp24}), and (\ref{ilp25}).

The next step is to describe the interpretation of the auxiliary variables:
\begin{itemize}
\item[(a)] $x_{p,q,j}^{(1)}=1$ iff $\sum\limits_{h=1}^j m_{p,h}>\sum\limits_{h=1}^j m_{q,h}$, \quad $x_{p,q,j}^{(2)}=0$ iff $\sum\limits_{h=1}^j m_{p,h}>\sum\limits_{h=1}^j m_{q,h}$
\item[(b)] $x_{i,j}^{(3)}=1$ iff $m_{i,j}>0$
\item[(c)] $x_{i,j}^{(4)}=1$ iff $m_{i,j+1}<n_{j+1}$
\item[(d)] $x_{i,j}^{(5)}=1$ iff $m_{i,j}>0$ and $m_{i,j+1}<n_{j+1}$
\item[(e)] $x_{i,i+1,j}^{(6)}=1$ iff $m_{i,j}>m_{i+1,j}$, \quad $x_{i,i+1,j}^{(7)}=0$ iff $m_{i,j}>m_{i+1,j}$
\item[(f)] $x_{i+1,i,j}^{(6)}=1$ iff $m_{i+1,j}>m_{i,j}$, \quad  $x_{i+1,i,j}^{(7)}=0$ iff $m_{i+1,j}>m_{i,j}$
\item[(g)] $x_{i,j}^{(8)}=1$ iff $m_{i,j}>m_{i+1,j}$ and $m_{i,h}\ge m_{i+1,h}$ for all $1\le h\le j$.
\end{itemize}

Let us assume for a moment that these seven equivalences are valid. In this case the $x_\star^{(\star)}$ are uniquely determined by
$n_j$ and $m_{i,j}$ for $1\le i\le r$, $1\le j\le t$. Next we prove that properties~(ii)-(iv) of Theorem~\ref{thm_characterization_cs} are fulfilled for every lattice point $n_j$, $m_{i,j}$, $x_\star^{(\star)}$ in $P$.

Due to Inequality (\ref{ilp06}), for every $1\le p,q\le r$, $p\neq q$ there exists at least one index $j$ such that $x_{p,q,j}=1$. Using interpretation $(a)$, we can conclude that $\widetilde{m}_p\preceq \widetilde{m}_q$ does not hold. Interchanging $p$ and $q$ yields $\widetilde{m}_p\bowtie \widetilde{m}_q$ for all $p\neq q$, which is equivalent to property~(ii) of Theorem \ref{thm_characterization_cs}.

If $t=1$ then due to the required $t+r>2$ we have $r\ge 2$ and property~(iv) of Theorem \ref{thm_characterization_cs} implies
$m_{1,1}>0$, which is equivalent to property~(iii) of Theorem \ref{thm_characterization_cs} in this case. In the remaining cases ($t>1$)  due to Inequality~(\ref{ilp13}), for every $1\le j<t$ there exists at least one row-index $i$ such that $x_{i,j}^{(5)}=1$. Using interpretation~(d), we can conclude $m_{i,j}>0$ and $m_{i,j+1}<n_{j+1}$. Thus the $n_j$, $m_{i,j}$ fulfill property~(iii)
of Theorem \ref{thm_characterization_cs}.

Due to Inequality (\ref{ilp23}), for each $1\le i\le r-1$ there is at least one $1\le j\le t$ such that $x_{i,j}^{(8)}=1$. Using interpretation (g), we conclude $\widetilde{m}_{i}\gtrdot \widetilde{m}_{i+1}$ which is equivalent to property~(iv) of Theorem \ref{thm_characterization_cs}.

\bigskip

All that remains to prove are the seven interpretations. Obviously we have $$\left|\sum_{h=1}^j\left(m_{p,h} -m_{q,h}\right)\right|\le n,$$
$$\left|m_{i,j}-m_{i+1,j}\right|\le m_{i,j}\le n-t+1,$$
and
$$n_j-m_{i,j}\le n-t+1.$$
\begin{itemize}
\item[(a)] If $\sum\limits_{h=1}^j m_{p,h}>\sum\limits_{h=1}^j m_{q,h}$ for given $p$, $q$, and $j$, then due to inequalities
(\ref{ilp04}) and (\ref{ilp05}) we have $x_{p,q,j}^{(1)}=1$ and $x_{p,q,j}^{(2)}=0$. In this case Inequality
(\ref{ilp03}) is also valid. Otherwise if $\sum\limits_{h=1}^j m_{p,h}\le\sum\limits_{h=1}^j m_{q,h}$ for given $p$, $q$,
and $j$, then due to inequality (\ref{ilp03}) and (\ref{ilp05}) we have $x_{p,q,j}^{(1)}=0$ and $x_{p,q,j}^{(2)}=1$.
In this case Inequality (\ref{ilp04}) is valid, too.
\item[(b)] If $m_{i,j}>0$ then due to Inequality (\ref{ilp08}) we have $x_{i,j}^{(3)}=1$ so that Inequality (\ref{ilp07}) is also
fulfilled. If $m_{i,j}=0$ then due to Inequality (\ref{ilp07}) we have $x_{i,j}^{(3)}=0$ so that Inequality (\ref{ilp08})
is also valid.
\item[(c)] If $m_{i,j+1}<n_{j+1}$ then due to Inequality (\ref{ilp10}) we have $x_{i,j}^{(4)}=1$. If $m_{i,j+1}=n_{j+1}$ then
due to Inequality (\ref{ilp09}) we have $x_{i,j}^{(4)}=0$. In both cases, inequalities (\ref{ilp09}) and (\ref{ilp10})
are valid.
\item[(d)] If $m_{i,j}>0$ and $m_{i,j+1}<n_{j+1}$ we have $x_{i,j}^{(3)}=1$ and $x_{i,j}^{(4)}=1$. Due to Inequality
(\ref{ilp12}) we have $x_{i,j}^{(5)}=1$ in this case. Otherwise we have $x_{i,j}^{(3)}+x_{i,j}^{(4)}\le 1$ and conclude
$x_{i,j}^{(5)}=0$ from Inequality (\ref{ilp11}). In both cases inequalities (\ref{ilp11}) and (\ref{ilp12}) are valid.
\item[(e)] If $m_{i,j}>m_{i+1,j}$ then, due to inequalities (\ref{ilp15}) and (\ref{ilp18}), we have $x_{i,i+1,j}^{(6)}=1$ and
$x_{i,i+1,j}^{(7)}=0$. Otherwise we have $m_{i,j}\le m_{i+1,j}$ and conclude $x_{i,i+1,j}^{(6)}=0$ and
$x_{i,i+1,j}^{(7)}=1$ from inequalities (\ref{ilp14}) and (\ref{ilp18}). In both cases inequalities (\ref{ilp14}) and
(\ref{ilp15}) are valid.
\item[(f)] Similar to (e).
\item[(g)] If $m_{i,j}>m_{i+1,j}$ and $m_{i,h}\ge m_{i+1,h}$ for all $1\le h\le j$ then we have $x_{i,i+1,j}^{(6)}=1$,
$x_{i,i+1,j}^{(7)}=0$ and $x_{i+1,i,h}^{(6)}=0$, $x_{i+1,i,h}^{(7)}=1$ for all $1\le h\le j$. Thus we can conclude
$x_{i,j}^{(8)}=1$ from Inequality (\ref{ilp22}). If $m_{i,j}\le m_{i+1,j}$ then we have $x_{i,i+1,j}^{(6)}=0$ so that
Inequality (\ref{ilp20}) forces $x_{i,j}^{(8)}=0$. If there exists an index $1\le h\le t$ such that $m_{i,h}< m_{i+1,h}$
then we have $x_{i+1,i,h}^{(6)}=1$ and conclude $x_{i,j}^{(8)}=0$ from Inequality (\ref{ilp21}). In all cases, inequalities
(\ref{ilp20}), (\ref{ilp21}), and (\ref{ilp22}) are valid.
\end{itemize}
\end{proof}

\section{Formulas for the number of complete simple games}
\label{sec_formulae}

\begin{lemma}
  $$
    cs(n,1,1)=cs(n,1)=n.
  $$
\end{lemma}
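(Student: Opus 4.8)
The plan is to reduce everything to Theorem~\ref{thm_characterization_cs} specialized to $t=1$. First I would observe that a complete simple game on a single equivalence class of voters can have only $r=1$ shift-minimal winning coalition: when $t=1$ the partial order $\preceq$ on the admissible vectors $(m)$ with $0\le m\le n$ coincides with the usual total order on the integers, so the antichain $\overline{W}$ of shift-minimal winning coalitions has at most one element; and it has at least one element, since the grand coalition $\widetilde{n}=(n)$ is winning. Hence every complete simple game with $t=1$ has exactly $r=1$, so $cs(n,1)=cs(n,1,1)$, and it remains only to count the latter.

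Next I would invoke part~(a) of Theorem~\ref{thm_characterization_cs} with $t=1$, $r=1$: such a game is encoded by the pair $\big((n),(m_{1,1})\big)$, where conditions~(ii) and~(iv) are vacuous because the matrix $\mathcal{M}$ has a single row, condition~(i) reads $0\le m_{1,1}\le n$, and the $t=1$ clause of condition~(iii) forces $m_{1,1}>0$. Thus the admissible encodings are precisely the $(m_{1,1})$ with $m_{1,1}\in\{1,\dots,n\}$; concretely these are the $n$ games in which a coalition is winning exactly when it contains at least $m_{1,1}$ of the $n$ equally desirable voters.

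Finally, part~(b) of Theorem~\ref{thm_characterization_cs} states that two such games are isomorphic iff their encodings coincide, so the $n$ encodings above represent $n$ pairwise non-isomorphic complete simple games. Therefore $cs(n,1,1)=n$, and by the first paragraph this also equals $cs(n,1)$. I do not expect a genuine obstacle here: the only point needing a word of justification is that $t=1$ forces $r=1$, which is essentially the remark already made at the start of Section~\ref{sec_model_lattice_points}.
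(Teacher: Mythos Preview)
Your argument is correct and is exactly the approach implied by the paper: the paper does not give a separate proof of this lemma, but at the start of Section~\ref{sec_model_lattice_points} it already remarks that for $t=1$ only $r=1$ is possible and the conditions of Theorem~\ref{thm_characterization_cs} reduce to $1\le m_{1,1}\le n_1=n$, which is precisely what you unpack.
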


\begin{lemma}
  For $t\ge2$ we have
  $$
    cs(n,t,1)={n+1\choose 2t-1}.
  $$
\end{lemma}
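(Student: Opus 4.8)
The plan is to specialize Theorem~\ref{thm_characterization_cs} to $r=1$ and then count with ordinary generating functions. When there is a single shift-minimal winning coalition, a complete simple game with $t$ types is encoded by $\widetilde{n}=(n_1,\dots,n_t)$ together with one row $\widetilde{m}=(m_1,\dots,m_t)$. Conditions (a)(ii) and (a)(iv) are vacuous, (a)(i) reads $0\le m_j\le n_j$, and — since $t>1$ — condition (a)(iii) says that for every $1\le j<t$ this single row must satisfy $m_j>0$ \emph{and} $m_{j+1}<n_{j+1}$. By Theorem~\ref{thm_characterization_cs}(b) the number $cs(n,t,1)$ is therefore exactly the number of pairs $(\widetilde{n},\widetilde{m})$ with $n_j\ge 1$, $\sum_j n_j=n$, and each $m_j$ ranging over its admissible set.

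First I would record, for a fixed class size $n_j$, how many choices of $m_j$ remain: for $j=1$ only $1\le m_1\le n_1$ is imposed, giving $n_1$ choices; for an interior index $2\le j\le t-1$ both a positivity and a ``not full'' restriction apply, i.e.\ $1\le m_j\le n_j-1$, giving $n_j-1$ choices; for $j=t$ only $0\le m_t\le n_t-1$ remains, giving $n_t$ choices. Hence $cs(n,t,1)=\sum n_1(n_2-1)\cdots(n_{t-1}-1)\,n_t$, the sum running over all compositions $n_1+\dots+n_t=n$ into $t$ positive parts. Being a convolution, its ordinary generating function is the product of the per-class series $\sum_{k\ge1}kx^k=x/(1-x)^2$ for the two outer classes and $\sum_{k\ge1}(k-1)x^k=x^2/(1-x)^2$ for each of the $t-2$ interior classes.

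Multiplying these factors yields $x^{2t-2}/(1-x)^{2t}$, and extracting the coefficient of $x^n$ gives $\binom{n-2t+2+2t-1}{2t-1}=\binom{n+1}{2t-1}$, which is the claim. The only delicate point — the ``hard part'', such as it is — is getting step one right: one has to read condition (a)(iii) correctly for a single winning vector so that each interior column acquires \emph{both} a ``$>0$'' and a ``$<n_{j+1}$'' constraint while the first and last columns acquire only one constraint each; this asymmetry is precisely what produces the $x^{2t-2}$ factor and hence the shift in the binomial coefficient. As a consistency check, $t=2$ gives the generating function $x^2/(1-x)^4$ and $cs(n,2,1)=\binom{n+1}{3}=\tfrac{n^3-n}{6}$, agreeing with the formula already established for two types of voters, and for $n<2t-2$ both sides vanish, as they should since no admissible configuration exists.
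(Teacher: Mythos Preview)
Your argument is correct. You correctly specialize Theorem~\ref{thm_characterization_cs} to $r=1$: with a single row, condition (a)(iii) forces $m_j>0$ for $1\le j\le t-1$ and $m_{j+1}<n_{j+1}$ for $1\le j\le t-1$, so the first column carries only the lower bound, the last column only the upper bound, and the interior columns both. The resulting count $\sum n_1(n_2-1)\cdots(n_{t-1}-1)n_t$ over compositions of $n$ into $t$ positive parts is indeed a convolution with generating function $x^{2t-2}/(1-x)^{2t}$, and the coefficient extraction is right.

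As for comparison: the paper does not actually prove this lemma. It is stated in the appendix without argument; the result on complete simple games with one shift-minimal winning coalition is attributed in the introduction to earlier work (\cite{1151.91021}). The paper's own machinery would of course recover it --- for fixed $t$ and $r=1$ there is only the single sub-case $c_j=1$, $d_j=0$, and the parametric Barvinok algorithm applied to the box constraints (the natural generalization of the system~(\ref{compact_ilp_2_1})) would output the same quasi-polynomial --- but your direct combinatorial derivation via generating functions is both shorter and more transparent, and it handles all $t\ge 2$ uniformly rather than one value of $t$ at a time. Your consistency check against $cs(n,2,1)=\tfrac{n^3-n}{6}$ and the generating function $x^2/(1-x)^4$ from Section~\ref{sec_two_types} is a nice touch.
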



Using the represented approach from Section~\ref{sec_main} we were able to determine the following formulas for $cs(n,t,r)$:

\medskip

\noindent
$cs(n,3,2)=0$ for $n\le 3$ and $cs(n,3,2)=$
\begin{eqnarray*}
  \!\!\!\!\!\!\!\!\!\!\!\!&&
  \frac{1}{26880}n^8 + \frac{13}{20160}n^7 -\frac{1}{2880}n^6-\frac{43}{5760}n^5-\frac{1}{2880}n^4\\
  \!\!\!\!\!\!\!\!\!\!\!\!&&
  + \frac{23}{1440}n^3 + \frac{23}{5040}n^2 + \left[\frac{1}{70},-\frac{41}{4480}\right]_n\cdot n +
  \left[\frac{0}{1},-\frac{1}{256}\right]_n
\end{eqnarray*}
for $n\ge 2$.

\bigskip
\bigskip

\noindent
$cs(n,3,3)=0$ for $n\le 4$ and $cs(n,3,3)=$
\begin{eqnarray*}
  \!\!\!\!\!\!\!\!\!\!\!\!&&
  \frac{23}{239500800}n^{11} + \frac{139}{87091200}n^{10} + \frac{257}{52254720}n^9 -\frac{107}{1161216}n^8\\
  \!\!\!\!\!\!\!\!\!\!\!\!&&
  + \frac{871}{14515200}n^7 + \frac{1177}{1555200}n^6 -\frac{1571}{1088640}n^5 -\frac{5}{6804}n^4 \\
  \!\!\!\!\!\!\!\!\!\!\!\!&&
  +\left[\frac{1429}{302400},\frac{21289}{4838400}\right]_n\cdot n^3 +
  \left[-\frac{401}{151200},\frac{16861}{9676800}\right]_n\cdot n^2\\
  \!\!\!\!\!\!\!\!\!\!\!\!&&
  + \left[-\frac{1}{3080},-\frac{10393}{2365440}\right]_n\cdot n\\
  \!\!\!\!\!\!\!\!\!\!\!\!&&
  + \left[\frac{0}{1},-\frac{451}{1492992},-\frac{4}{729},\frac{5}{2048},-\frac{2}{729},-\frac{4547}{1492992}\right]_n
\end{eqnarray*}
for $n\ge 2$.

\bigskip
\bigskip

\noindent
$cs(n,3,4)=0$ for $n\le 5$ and $cs(n,3,4)=$
\begin{eqnarray*}
  \!\!\!\!\!\!\!\!\!\!\!\!&&
  \frac{2833}{16738231910400}n^{14} + \frac{913}{391283343360}n^{13} -\frac{25733}{3310859059200}n^{12}\\
  \!\!\!\!\!\!\!\!\!\!\!\!&&
  -\frac{8329}{41385738240}n^{11} -\frac{104849}{300987187200}n^{10} + \frac{434377}{30098718720}n^9\\
  \!\!\!\!\!\!\!\!\!\!\!\!&&
  -\frac{3853}{85730400}n^8  -\frac{56471}{752467968}n^7 + \frac{10222451}{18811699200}n^6\\
  \!\!\!\!\!\!\!\!\!\!\!\!&&
  + \left[-\frac{103807}{209018880},-\frac{1628593}{3344302080}\right]_n\cdot n^5\\
  \!\!\!\!\!\!\!\!\!\!\!\!&&
  + \left[-\frac{3612949}{2874009600},-\frac{418954397}{367873228800}\right]_n\cdot n^4\\
  \!\!\!\!\!\!\!\!\!\!\!\!&&
  + \left[\frac{3217}{1451520},\frac{33517}{23224320}\right]_n\cdot n^3
  + \Big[-\frac{913147}{1816214400},\\
  \!\!\!\!\!\!\!\!\!\!\!\!&&
  \frac{484043734439}{301288174387200},-\frac{29120107}{147113366400},\frac{5408921719}{3719607091200},\\
  \!\!\!\!\!\!\!\!\!\!\!\!&&
  -\frac{51542507}{147113366400},\frac{29964809639}{301288174387200}\Big]_n\cdot n^2
  + \Big[-\frac{197}{144144},\\
  \!\!\!\!\!\!\!\!\!\!\!\!&&
  -\frac{719771827}{478235197440},\frac{80611}{105080976},-\frac{16985027}{5904138240},\frac{59}{11675664},\\
  \!\!\!\!\!\!\!\!\!\!\!\!&&
  -\frac{3197869643}{4304116776960}\Big]_n\cdot n + \Big[\frac{0}{1},-\frac{306295}{859963392},-\frac{2}{729},\frac{233}{131072},\\
  \!\!\!\!\!\!\!\!\!\!\!\!&&
  -\frac{14}{6561},-\frac{92287}{95551488}\Big]_n
\end{eqnarray*}
for $n\ge 2$.

\bigskip
\bigskip

\noindent
$cs(n,4,2)=0$ for $n\le 4$ and $cs(n,4,2)=$
\begin{eqnarray*}
  \!\!\!\!\!\!\!\!\!\!\!\!&&
  \frac{29}{319334400}n^{11} + \frac{197}{58060800}n^{10} + \frac{67}{3870720}n^9 -\frac{793}{3870720}n^8\\
  \!\!\!\!\!\!\!\!\!\!\!\!&&
  -\frac{341}{9676800}n^7 + \frac{667}{345600}n^6 -\frac{73}{45360}n^5 -\frac{2791}{725760}n^4\\
  \!\!\!\!\!\!\!\!\!\!\!\!&&
  + \frac{2683}{604800}n^3 + \left[-\frac{41}{12600},\frac{37283}{6451200}\right]_n\cdot n^2\\
  \!\!\!\!\!\!\!\!\!\!\!\!&&
  + \left[\frac{53}{9240},-\frac{13289}{4730880}\right]_n\cdot n + \left[\frac{0}{1},-\frac{15}{4096}\right]_n
\end{eqnarray*}
for $n\ge 2$.

\bigskip
\bigskip

\noindent
$cs(n,4,3)=0$ for $n\le 4$ and $cs(n,4,3)=$
\begin{eqnarray*}
  \!\!\!\!\!\!\!\!\!\!\!\!&&
  \frac{40441/}{1506440871936000}n^{15} + \frac{227}{190749081600}n^{14}\\
  \!\!\!\!\!\!\!\!\!\!\!\!&&
  + \frac{55447}{2459495301120}n^{13} + \frac{731}{5748019200}n^{12}-\frac{2450159}{3678732288000}n^{11}\\
  \!\!\!\!\!\!\!\!\!\!\!\!&&
  -\frac{494953}{66886041600}n^{10}+ \frac{9002453}{316036546560}n^9 -\frac{4213}{8670412800}n^8\\
  \!\!\!\!\!\!\!\!\!\!\!\!&&
  -\frac{36098869}{188116992000}n^7 + \left[\frac{623521}{836075520},\frac{9976903}{13377208320}\right]_n\cdot n^6\\
  \!\!\!\!\!\!\!\!\!\!\!\!&&
  + \left[-\frac{63029}{574801920},-\frac{16678043}{147149291520}\right]_n\cdot n^5\\
  \!\!\!\!\!\!\!\!\!\!\!\!&&
  + \left[-\frac{1341127}{479001600},-\frac{157994831}{61312204800}\right]_n\cdot n^4\\
  \!\!\!\!\!\!\!\!\!\!\!\!&&
  + \left[\frac{5098711}{18162144000},-\frac{34324686997}{37196070912000}\right]_n\cdot n^3\\
  \!\!\!\!\!\!\!\!\!\!\!\!&&
  + \left[\frac{12701}{46569600},\frac{716763911}{190749081600}\right]_n\cdot n^2 + \Big[\frac{1711}{720720},\\
  \!\!\!\!\!\!\!\!\!\!\!\!&&
  \frac{1108675}{1721646710784},\frac{3809879}{525404880},-\frac{5037829}{2361655296},\frac{2368439}{525404880},\\
  \!\!\!\!\!\!\!\!\!\!\!\!&&
  \frac{4724419267}{1721646710784}\Big]_n\cdot n + \Big[\frac{0}{1},-\frac{3711109}{5159780352},\frac{21277}{20155392},\\
  \!\!\!\!\!\!\!\!\!\!\!\!&&
  -\frac{631}{262144},\frac{14}{19683},\frac{3104635}{5159780352},-\frac{1}{1024},-\frac{8749957}{5159780352},\\
  \!\!\!\!\!\!\!\!\!\!\!\!&&
  \frac{40}{19683},-\frac{375}{262144},-\frac{5347}{20155392},-\frac{1934213}{5159780352}\Big]_n
\end{eqnarray*}
for $n\ge 2$.

\bigskip
\bigskip

\noindent
$cs(n,5,2)=0$ for $n\le 5$ and $cs(n,5,2)=$
\begin{eqnarray*}
  \!\!\!\!\!\!\!\!\!\!\!\!&&
  \frac{1}{10729635840}n^{14} + \frac{37}{6642155520}n^{13} + \frac{1}{12773376}n^{12}\\
  \!\!\!\!\!\!\!\!\!\!\!\!&&
  -\frac{13}{24330240}n^{11} -\frac{9683}{1393459200}n^{10} + \frac{1637}{30965760}n^9\\
  \!\!\!\!\!\!\!\!\!\!\!\!&&
  -\frac{45539}{975421440}n^8-\frac{4961}{11612160}n^7 +\frac{2741}{2419200}n^6 -\frac{23}{92160}n^5\\
  \!\!\!\!\!\!\!\!\!\!\!\!&&
  -\frac{208643}{95800320}n^4 + \left[\frac{5293}{1330560},\frac{272099}{170311680}\right]_n\cdot n^3\\
  \!\!\!\!\!\!\!\!\!\!\!\!&&
  + \left[-\frac{110147}{23284800},\frac{35652461}{11921817600}\right]_n\cdot n^2\\
  \!\!\!\!\!\!\!\!\!\!\!\!&&
  + \left[\frac{29}{12012},-\frac{95765}{98402304}\right]_n\cdot n + \left[\frac{0}{1},-\frac{31}{16384}\right]_n
\end{eqnarray*}
for $n\ge 2$.

We would like to remark, that we have numerically veryfied the stated formulas for all $n\le 11$ via exhaustive generation.

\section{Number of sub-cases for the tuples from Section 5}
\label{sec_sub_cases}


For small values of $t$ and $r$ we have generated all feasible tuples
$$
  \Big(\left(a_{i,j}\right)_{1\le i<j\le r},\,\left(b_{i,j}\right)_{1\le i<j\le r},\,
  \left(c_j\right)_{1\le j<t},\,\left(d_j\right)_{1\le j<t}\Big).
$$
in a tree search, where we have solved the corresponding intermediate integer programs
using \texttt{ILOG CPLEX 11.2}. In the third column of Table~\ref{table_all_sub_cases}
we state their number and in the fourth column the computation time for the search. Here
we have to remark that the implementation of this search is highly non-optimized since
the bottleneck of the overall algorithm lies in the application of the parametric Barvinok
algorithm. To get an impression of the latter fact we have given the (known) running times of
the parametric Barvinok algorithm on the special sub-case $a_{i,j}=1$, $b_{i,j}=2$, $c_j=1$,
$d_j=0$, the running time of the non-parametric Barvinok algorithm on the same sub-case with $n=20$,
and the total running time for the parametric Barvinok algorithm in the last three columns of
Table~\ref{table_all_sub_cases}.

\begin{table}[htp]
  \begin{center}
    \begin{tabular}{|rrr|r|rrr|}
      \hline
      $\mathbf{t}$ & $\mathbf{r}$ & $\mathbf{\#}$ & \textbf{cases} & \multicolumn{3}{c}{\textbf{barvinok}}\\
       3 & 2 &        9 &    0s &   0s &  0s &  1s \\
       3 & 3 &       46 &    1s &  19s &  0s &  6m \\
       3 & 4 &      254 &   14s &  43m &  0s & 91h \\
       3 & 5 &     1680 &    5m &  52h &  1s &     \\
       3 & 6 &    13474 &  123m &      &  5s &     \\
       4 & 2 &       49 &    0s &  14s &  0s &  2m \\
       4 & 3 &     1071 &   13s & 222m &  0s & 13d \\
       4 & 4 &    23666 &   12m &      &  5s &     \\
       5 & 2 &      217 &    1s &  27m &  0s &  5h \\
       5 & 3 &    17456 &    3m &      &  4s &     \\
       6 & 2 &      865 &    4s &      &  0s &     \\
       6 & 3 &   231081 &   31m &      &  6m &     \\
       7 & 2 &     3241 &   14s &      &  2s &     \\
       7 & 3 &  2679286 &  320m &      &     &     \\
       8 & 2 &    11665 &   51s &      &  9s &     \\
       9 & 2 &    40825 &    3m &      &  2m &     \\
      10 & 2 &   139969 &   10m &      & 62m &     \\
      11 & 2 &   472393 &   36m &      &     &     \\
      12 & 2 &  1574641 &  123m &      &     &     \\
      13 & 2 &  5196313 &  364m &      &     &     \\
      14 & 2 & 17006113 & 1150m &      &     &     \\
      \hline
    \end{tabular}
    \caption{Number of all sub-cases for given parameters $\mathbf{t}$ and $\mathbf{r}$.}
    \label{table_all_sub_cases}
  \end{center}
\end{table}

In order to determine some more exact formulas for the number $cs(n,t,r)$ of complete simple games one should exploit the fact that the polytopes of the sub-cases
are related, i.~e.{} they differ only in a few describing hyperplanes. It should be possible to re-implement the parametric Barvinok algorithm for sets of similar polytopes, reusing common intermediate results. Comparing the timing results of the second last column with the third last column of Table~\ref{table_all_sub_cases}, it may be possible to deduce the special types of the counting quasi-polynomials in each sub-case in order to determine them using interpolation and several runs of the non-parametric Barvinok algorithm. Here we would like to remark, that the used implementation in some cases determines large quasi-polynomials which are only valid for a small number, e.~g.{} $3$, of $n$-values.

So finally we have to say, that our approach for the determination of exact formulas for $cs(n,t,r)$, in dependence of the number of voters $n$, looks promising, but a lot of research and implementational work may be needed in order to determine some more formulas.

\section{Numerical data on the number of complete simple games}
\label{sec_numerical}

For $t\in\{3,4\}$ we were able to compute some further exact values of $cs(n,t)$, see Table~\ref{table_cs_3} and Table~\ref{table_cs_4}.

\begin{table}[htp]
  \begin{center}
    \begin{tabular}{rr}
      \hline
      $\mathbf{n}$ & $\mathbf{cs(n,3)}$\\
       1 &             0 \\
       2 &             0 \\
       3 &             0 \\
       4 &             6 \\
       5 &            50 \\
       6 &           262 \\
       7 &          1114 \\
       8 &          4278 \\
       9 &         15769 \\
      10 &         58147 \\
      11 &        221089 \\
      12 &        886411 \\
      13 &       3806475 \\
      14 &      17681979 \\
      15 &      89337562 \\
      16 &     492188528 \\
      17 &    2959459154 \\
      18 &   19424078142 \\
      19 &  139141985438 \\
      20 & 1087614361775 \\
      21 & 9274721292503 \\
      \hline
    \end{tabular}
    \caption{Known values of the number $\mathbf{cs(n,3)}$ of complete simple games with $\mathbf{3}$ types of voters.}
    \label{table_cs_3}
  \end{center}
\end{table}

\begin{table}[htp]
  \begin{center}
    \begin{tabular}{rr}
      \hline
      $\mathbf{n}$ & $\mathbf{cs(n,4)}$\\
       1 &             0 \\
       2 &             0 \\
       3 &             0 \\
       4 &             0 \\
       5 &            24 \\
       6 &           426 \\
       7 &          4769 \\
       8 &         45483 \\
       9 &        431440 \\
      10 &       4570902 \\
      11 &      59776637 \\
      12 &    1047858496 \\
      13 &   26000281487 \\
      \hline
    \end{tabular}  
    \caption{Known values of the number $\mathbf{cs(n,4)}$ of complete simple games with $\mathbf{4}$ types of voters.}
    \label{table_cs_4}
  \end{center}
\end{table}

\end{document}